\newtheorem{theorem}{Theorem}
\newtheorem{corollary}[theorem]{Corollary}
\newtheorem{definition}[theorem]{Definition}
\providecommand{\mathbold}[1]{\bm{#1}}
\newcommand{\R}{\mathbb{R}}
\newcommand{\N}{\mathbb{N}}
\newcommand{\vct}[1]{\bm{#1}}
\newcommand{\mtx}[1]{\mathbold{#1}}
\newcommand{\inn}{\text{ in }}
\newcommand{\foralll}{\text{ for all }} 
\DeclareMathOperator{\cone}{co}
\DeclareMathOperator{\cl}{cl}
\DeclareMathOperator{\sgn}{sgn}
\newcommand{\workstation}{\bm{\mathsf{W}}}
\newcommand{\laptop}{\bm{\mathsf{L}}}
\DeclareMathOperator{\Sig}{Sig}
\DeclareMathOperator{\Poly}{Poly}
\newcommand{\cnns}[1]{\mathsf{C}_{\mathsf{NNS}}(#1)}
\newcommand{\cnnsgeo}[1]{\mathsf{C}_{\mathsf{NNS}}^{\mathsf{GEOM}}(#1)}
\newcommand{\cnnp}[1]{\mathsf{C}_{\mathsf{NNP}}(#1)}
\newcommand{\csage}[1]{\mathsf{C}_{\mathsf{SAGE}}(#1)}
\newcommand{\csagegeo}[1]{\mathsf{C}_{\mathsf{SAGE}}^{\mathsf{GEOM}}(#1)}
\newcommand{\cage}[1]{\mathsf{C}_{\mathsf{AGE}}(#1)}
\newcommand{\cpolyage}[1]{\mathsf{C}_{\mathsf{AGE}}^{\mathsf{POLY}}(#1)}
\newcommand{\cpolysage}[1]{\mathsf{C}_{\mathsf{SAGE}}^{\mathsf{POLY}}(#1)}
\newcommand{\sigreps}[1]{\mathrm{SR}(#1)}
\newcommand{\relent}[2]{D(#1, #2)}
\title{Signomial and Polynomial Optimization via Relative Entropy and Partial Dualization}
\author{Riley Murray$^\dag$, Venkat Chandrasekaran$^{\dag,\ddag}$, and Adam Wierman$^\dag$ \thanks{Email: rmurray@caltech.edu, venkatc@caltech.edu, adamw@caltech.edu \newline Acknowledgements: R.M. was
supported in part by an NSF Graduate Research Fellowship and by NSF
grant CCF-1350590, NSF grant CCF-1637598, and AFOSR grant
FA9550-16-1-0210. V.C. was supported in part by NSF grants CCF-1350590
and CCF-1637598, AFOSR grant FA9550-16-1-0210, and a Sloan Research
Fellowship. A.W. was supported in part by NSF grant CCF-1637598.} \vspace{0.25in} \\ $^\dag$ Department of Computing and Mathematical Sciences\\ $^\ddag$ Department of Electrical Engineering \\ California Institute of Technology \\ Pasadena, CA 91125}
\begin{document}

\maketitle

\begin{abstract}
	We describe a generalization of the Sums-of-AM/GM Exponential (SAGE) relaxation methodology for obtaining bounds on constrained signomial and polynomial optimization problems. Our approach leverages the fact that relative entropy based SAGE certificates conveniently and transparently blend with convex duality, in a manner that Sums-of-Squares certificates do not.
	This more general approach not only retains key properties of ordinary SAGE relaxations (e.g. sparsity preservation), but also inspires a novel perspective-based method of solution recovery.
	We illustrate the utility of our methodology with a range of examples from the global optimization literature, along with a publicly available software package.
	\newline \\
	\noindent \textbf{Keywords: global optimization, exponential cone programs, SAGE certificates, SOS certificates, signomial programming.} 
\end{abstract}

\clearpage

\section{Introduction}

A signomial is a function of the form $\vct{x} \mapsto \sum_{i=1}^m c_i \exp( \vct{\alpha}_i \cdot \vct{x})$ for real scalars $c_i$ and row vectors $\vct{\alpha}_i$ in $\R^{1 \times n}$.
Signomial optimization (often called signomial programming) concerns the minimization of a signomial, subject to signomial inequality and equality constraints.
Signomial programming is a computationally challenging problem with applications in chemical engineering \cite{Rountree1982}, aeronautics \cite{OCW2017}, circuit design \cite{jabr2007}, and communications network optimization \cite{Chiang2009}.
Signomials are sometimes thought of as generalizations of polynomials over the positive orthant; by 
a change of variables $y_i = \exp x_i$ one arrives at ``geometric form'' signomials $\vct{y} \mapsto \sum_{i=1}^m c_i \prod_{j=1}^n y_j^{\alpha_{ij}}$.
Despite this aesthetic similarity between polynomials and geometric-form signomials, we must bear in mind that signomials and polynomials have many significant differences.
Where polynomials can be generated by a countably infinite basis, signomials require an uncountably infinite basis.
Where polynomials are closed under composition, signomials are not.
Where polynomials and exponential-form signomials are defined on all of $\R^n$ -- geometric-form signomials are only defined on the positive orthant.

For many years these abstract differences between signomials and polynomials have coincided with algorithmic disparities.
Contemporary methods for signomial programming use some combination of local linearization, penalty functions, sequential geometric programming, and branch-and-bound \cite{Shen2004,WL2005,Shen2006,Qu2007,Shen2008,HSC2014,Xu2014} -- ideas which precede the advent of modern convex optimization.
By contrast, the field of polynomial optimization has been substantially influenced by semidefinite programming, specifically through Sums-of-Squares (SOS) certificates of polynomial nonnegativity \cite{shor,ParriloPhD,Lasserre2001}.
In recent work, Chandrasekaran and Shah proposed the Sums-of-AM/GM Exponential or “SAGE” certificates of signomial nonnegativity, which provided a new convex relaxation framework for signomial programs akin to SOS methods for polynomial optimization.  \cite{SAGE1}.
Where SOS certificates make use of semidefinite programming, SAGE certificates use the convex \textit{relative entropy function}.
The authors of the present article further demonstrated that a natural modification to SAGE certificates leads to a tractable relative entropy representable sufficient condition for global polynomial nonnegativity \cite{SAGE2}.

This article is concerned with how proof systems for function nonnegativity can be used in the service of constrained optimization.
The basic idea here is simple: for a function $f$, a set $X$, and a real number $\gamma$, we have $\inf\{f(\vct{x}) \,:\, \vct{x} \in X\} \geq \gamma$ if and only if $f - \gamma$ is nonnegative over $X$.
The trouble is that to leverage this fact, we require ways to extend certificates for \textit{global} nonnegativity (such as SOS or SAGE certificates) to prove nonnegativity over $X \subsetneq \R^n$.
For the polynomial case one usually performs this extension by appealing to representation theorems from real algebraic geometry.
In the absence of such representation theorems, one typically relies on a dual problem obtained from the minimax inequality.

The primary contribution of this article is to show how SAGE certificates -- by virtue of their roots in convex duality -- provide a simple and powerful alternative method for describing functions which are nonnegative over proper subsets of $\R^n$.
Our method can be used both independently from and in conjunction with the minimax inequality.
The space of possibilities with our method is large, and it is far from obvious as to which variations of this methodology are most useful for given problem structures.
To facilitate research in this regard, we provide a user-friendly software package which implements all functionality described in this article.
We provide detailed worked examples in several places alongside conceptual development.
A dedicated section on computational experiments is provided, and several avenues of possible future research are outlined in a discussion section.

\subsection{Article outline and our contributions}

This article makes both mathematical and methodological contributions to signomial and polynomial optimization.
Section \ref{sec:background} speaks to key questions which help place our work in a broader context.
These questions include
(1) What are the sources of error in nonnegativity-based relaxations of constrained optimization problems, and how are they usually mitigated?
(2) How exactly are the original SAGE cones formulated?
(3) How can we understand partial dualization in the context of existing nonnegativity and moment relaxations?

Once these questions are answered, we introduce the concept of conditional SAGE certificates for signomial nonnegativity (Section \ref{sec:condsage_sigs}).
We prove a representation result for the cone of these nonnegativity certificates (Theorem \ref{thm:sigsage_represent_age}), and develop a solution recovery algorithm by investigating the dual cone (Algorithm \ref{alg:sp_solrec}).
Section \ref{sec:condsage_sigs:ref_hier} describes two ``hierarchies'' of SAGE-based convex relaxations for signomial programs: one which uses the minimax inequality, and one which is minimax-free.
The authors know of no analog to the minimax-free hierarchy in the polynomial optimization literature, and believe the underlying idea of the minimax-free hierarchy is of independent theoretical interest.

Section \ref{sec:condsage_polys} extends the idea of conditional SAGE certificates to polynomials.
We discuss basic properties of the conditional SAGE polynomial cones before proving representation results (Theorems \ref{thm:reduce_polysage_to_sigsage_2} and \ref{thm:reduce_polysage_to_sigsage}) which provide the basis for tractable relaxations of constrained polynomial optimization problems.
Section \ref{sec:condsage_polys:solrec} provides simple descriptions for dual conditional SAGE polynomial cones, and develops an efficient solution recovery algorithm based on these descriptions (Algorithm \ref{alg:pop_solrec}).
Section \ref{sec:condsage_polys:refhier} proposes reference hierarchies for polynomial optimization with SAGE certificates.
Our minimax-free hierarchy has an interesting structure which reflects a link between SAGE signomials and SAGE polynomials, by way of the ``signomial representatives'' from \cite{SAGE2}.

Section \ref{sec:experiments} reports the effectiveness of our methodology on fifty-one problems appearing in the literature (sourced from \cite{Yan1976,RM1978,RN2008,HSC2014,Xu2014,LTY2017,WLT2018,VLSE}), as well as randomly generated problems.
A central component of our experiments is a desire to facilitate research both into theory underlying conditional SAGE relaxations, and the practice of using these relaxations in engineering design optimization.
Towards this end, we provide the  ``\texttt{sageopt}'' Python package.\footnote{\url{https://rileyjmurray.github.io/sageopt/}} \texttt{Sageopt} is a documented, tested, and convenient platform for constructing and solving SAGE relaxations, as well as analyzing the results thereof.
We used \texttt{sageopt} for all experiments in this article.

\subsection{Notation and preliminary definitions}

Vectors and matrices always appear in boldface.
The $i^{\text{th}}$ entry of a vector $\vct{v}$ is $v_i$, and the vector formed by deleting the $i^{\text{th}}$ entry of $\vct{v}$ is $\vct{v}_{\setminus i}$.
A matrix $\mtx{A}$ is built by stacking rows $\vct{a}_i \in \R^{1 \times n}$, and $\mtx{A}_{\setminus i}$ is the submatrix formed by deleting the $i^{\text{th}}$ row of $\mtx{A}$.
All logarithms in this article are base-$e$. 
Elementary functions from $\R$ to $\R$ are extended first to vectors in an elementwise fashion, and subsequently to sets in a pointwise fashion.
For a convex cone $K \subset \R^r$, the dual cone is $K^\dagger \doteq \{ \vct{y} \,:\, \vct{y}^\intercal \vct{x} \geq 0 \foralll \vct{x} \inn \R^r \}$.
For $A, B \subset \R^n$, $A \subset B$ and $A \subsetneq B$ denote non-strict and strict inclusion respectively.
The operator ``$\cl$'' computes set-closure with respect to the standard topology.

For an $m \times n$ matrix $\mtx{\alpha}$ and a vector $\vct{c}$ in $\R^m$, we write $f = \Sig(\mtx{\alpha},\vct{c})$ to mean that $f$ takes values $f(\vct{x}) = \sum_{i=1}^m c_i \exp(\vct{\alpha}_i \cdot \vct{x})$.
When $\mtx{\alpha}$ is a matrix of nonnegative integers, we write $f = \Poly(\mtx{\alpha},\vct{c})$ to mean that $\vct{c}$ is the coefficient vector of $f$ with respect to the monomial basis $\vct{x} \mapsto \vct{x}^{\vct{\alpha}_i} \doteq \prod_{j=1}^n x_j^{\alpha_{ij}}$.
Given a matrix $\mtx{\alpha}$ and a set $X \subset \R^n$, one has the nonnegativity cones
\[
\cnns{\mtx{\alpha},X} \doteq \{ \vct{c} \,:\, \Sig(\mtx{\alpha},\vct{c})(\vct{x}) \geq 0 \foralll \vct{x} \inn X \}
\]
and
\[
\cnnp{\mtx{\alpha},X} \doteq \{ \vct{c} \,:\, \Poly(\mtx{\alpha},\vct{c})(\vct{x}) \geq 0 \foralll \vct{x} \inn X \}.
\]
We write $\cnns{\mtx{\alpha}}$ and $\cnnp{\mtx{\alpha}}$ in reference to the above cones when $X = \R^n$.
Except in special cases on $\mtx{\alpha}$, it is computationally intractable to check membership in either $\cnns{\mtx{\alpha}}$ or $\cnnp{\mtx{\alpha}}$ \cite{Murty1987}.
The inner-approximations of nonnegativity cones developed in this article make use of the relative entropy function;
this is the convex function ``$D$'' with domain $\R^m_+ \times \R^m_+$ taking values
\[
 \relent{\vct{u}}{\vct{v}} = \sum_{i=1}^m u_i \log(u_i / v_i).
\]

This article includes computational experiments with SAGE certificates and states solver runtimes for many of these examples.
All of these examples rely on the MOSEK solver \cite{mosek}.
We use two different machines to provide a sense of when it may be practical to solve a SAGE relaxation with given computational resources.
Machine $\workstation$ is an HP Z820 workstation, with two 8-core 2.6GHz Intel Xeon E5-2670 processors and 256GB 1600MHz DDR3 RAM.
Machine $\laptop$ is a 2013 MacBook Pro, with a dual-core 2.4GHz Intel Core i5 processor and 8GB 1600MHz DDR3 RAM.

\section{Background}\label{sec:background}

In this article we study constrained nonconvex optimization problems of the form
\begin{equation}
    (f,g,\phi)^\star_{X} = \inf\{ f(\vct{x}) : \vct{x} \inn X \subset \R^n, ~ g(\vct{x}) \geq \vct{0}, ~\phi(\vct{x}) = \vct{0} \} \label{eq:background:genericConstrainedMin}
\end{equation}
where $f$ is a function from $\R^n$ to $\R$, $g$ maps $\R^n$ to $\R^{k_1}$, and $\phi$ maps $\R^n$ to $\R^{k_2}$.
Our primary goal is to produce lower bounds $(f,g,\phi)^{\mathrm{lb}}_X \leq (f,g,\phi)^\star_X$.
In the event that $(f,g,\phi)^{\mathrm{lb}}_X = (f,g,\phi)^\star_X$, we are also interested in recovering optimal solutions to \eqref{eq:background:genericConstrainedMin}.
For ease of exposition, this section focuses on problems of the form \eqref{eq:background:genericConstrainedMin} with only inequality constraints-- i.e. the problem of bounding
\begin{equation}
    (f,g)^\star_{X} = \inf\{ f(\vct{x}) : \vct{x} \inn X \subset \R^n, ~ g(\vct{x}) \geq \vct{0} \}. \tag{1.1} \label{eq:background:ineqConstrainedMin}
\end{equation}

In Section \ref{sec:background:duals_in_nonconvex_opt} we review the Lagrange dual relaxation of the above problem, both in minimax form and as a nonnegativity problem.
Section \ref{sec:background:sage_and_sos} provides the minimum background on SAGE and SOS nonnegativity certificates needed develop the contributions of this article.
In Section \ref{sec:background:strengthen} we review standard techniques for strengthening nonnegativity-based relaxations of problems such as \eqref{eq:background:ineqConstrainedMin}; this includes the use of redundant constraints, nonconstant Lagrange multipliers, and strengthening nonnegativity certificates via modulation.
Section \ref{sec:background:partial_dualization} concludes with discussion on partial dualization.
Until Section \ref{sec:background:partial_dualization}, the set $X$ appearing in Problem \ref{eq:background:ineqConstrainedMin} shall be the whole of $\R^n$.

\subsection{Dual problems in nonconvex optimization}\label{sec:background:duals_in_nonconvex_opt}

The simplest way to lower bound $(f,g)^\star_{\R^n}$ is via the Lagrange dual.
For each coordinate function $g_i$ of $g$, we introduce a dual variable $\lambda_i \geq 0$ and consider the Lagrangian $\mathcal{L}(\vct{x},\vct{\lambda}) = f(\vct{x}) - \vct{\lambda}^\intercal g(\vct{x})$.
The \textit{Lagrange dual problem} is to compute
\begin{equation*}
(f,g)^{\mathrm{L}}_{\R^n} = \sup_{\vct{\lambda} \geq \vct{0}} \inf_{\vct{x} \in \R^n} \mathcal{L}(\vct{x},\vct{\lambda}).
\end{equation*}
By the minimax inequality, we can be certain that $(f,g)^{\mathrm{L}}_{\R^n} \leq (f,g)^\star_{\R^n}$.

There are many situations when the Lagrange dual problem is intractable.
For signomial and polynomial optimization, one usually needs to compute yet another lower bound $(f,g)_{\R^n}^{\mathrm{d}} \leq (f,g)_{\R^n}^{\mathrm{L}}$.
Contemporary approaches for computing such bounds begin by introducing a parameterized function $\psi(\gamma,\vct{\lambda})$ which takes values $\psi(\gamma,\vct{\lambda})(\vct{x}) =  \mathcal{L}(\vct{x},\vct{\lambda}) - \gamma$.
One reformulates the dual problem as
\[
(f,g)^{\mathrm{L}}_{\R^n} = \sup\{  \gamma ~: ~\vct{\lambda} \geq \vct{0},~ \gamma \inn \R, ~ \psi(\gamma,\vct{\lambda})(\vct{x}) \geq 0 \foralll \vct{x} \inn \R^n \},
\]
and the constraint that ``$\psi(\gamma,\vct{\lambda})$ defines a nonnegative function'' is then tightened to ``$\psi(\gamma,\vct{\lambda})$ satisfies a particular sufficient condition for nonnegativity.''
The expectation is that the sufficient condition can be expressed by tractable convex constraints on variables $\gamma$ and $\vct{\lambda}$.
For example, SOS certificates for polynomial nonnegativity can be expressed via linear matrix inequalities, and SAGE certificates for signomial and polynomial nonnegativity can be expressed with the relative entropy function.

\subsection{SAGE and SOS nonnegativity certificates}\label{sec:background:sage_and_sos}

In the development of the SAGE inner approximation for $\cnns{\mtx{\alpha}}$, Chandrasekaran and Shah considered the structure where the coefficient vector $\vct{c}$ contained at most one negative entry $c_k$; if such a function was globally nonnegative, they called it an \textit{AM/GM Exponential}, or an \textit{AGE function} \cite{SAGE1}.
One thus defines the \textit{$k^{\text{th}}$ AGE cone}
\begin{equation}
\cage{\mtx{\alpha},k} = \{  \vct{c} : \vct{c}_{\setminus k} \geq \vct{0} \text{ and } \vct{c} \text{ belongs to } \cnns{\mtx{\alpha}} \}. \label{eq:background:defAGE} \nonumber
\end{equation}
A key contribution of \cite{SAGE1} was the use of convex duality to derive an efficient description of the AGE cones.
The outcome of this derivation is that a vector $\vct{c}$ belongs to $\cage{\mtx{\alpha},k}$ iff $\vct{c}_{\setminus k} \geq \vct{0}$ and
\begin{equation}
\text{some} \quad \vct{\nu} \inn \R^{m-1}_+ \quad \text{has} \quad [\mtx{\alpha}_{\setminus i} - \vct{1}\vct{\alpha}_i ]^\intercal \vct{\nu} = \vct{0} \quad \text{and}  \quad  \relent{\vct{\nu}}{\vct{c}_{\setminus k}} - \vct{\nu}^\intercal \vct{1}  \leq c_k. \label{eq:background:ageRelEnt}
\end{equation}
The system of constraints given by \eqref{eq:background:ageRelEnt} is crucially jointly convex in $\vct{c}$ and the auxiliary variable $\vct{\nu}$.
The set defined by the \textit{sum} of all AGE cones
\begin{equation}
\csage{\mtx{\alpha}} \doteq \left\{ \vct{c} : \text{ there exist } \vct{c}^{(k)} \inn \cage{\mtx{\alpha},k} \text{ satisfying } \vct{c} = \sum_{k=1}^m \vct{c}^{(k)} \right\}\label{eq:background:csagedef}
\end{equation}
is therefore efficiently representable.

The SAGE cone as defined above applies to signomials, but a similar construction exists for certifying global nonnegativity of polynomials \cite{SAGE2}.
Formally, we say that $f = \Poly(\mtx{\alpha},\vct{c})$ is an \textit{AGE polynomial} if it is nonnegative over $\R^n$, and if $f(\vct{x})$ contains at most one term $c_i \vct{x}^{\vct{\alpha}_i}$ that is not a monomial square.
In conic form this writes as
\begin{align}
\cpolyage{\mtx{\alpha},k} = \{ \vct{c} :&~\Poly(\mtx{\alpha},\vct{c})(\vct{x}) \geq 0 \foralll \vct{x} \inn \R^n \text{, and} \nonumber \\
	&~ \vct{c}_{\setminus k} \geq \vct{0} , ~c_i = 0 \foralll i \neq k \text{ with } \vct{\alpha}_i \not\in 2\mathbb{N}^{ 1 \times n} \}, \label{eq:def_poly_age_ordinary}
\end{align}
and such AGE cones naturally give rise to
\begin{equation}
\cpolysage{\mtx{\alpha}} \doteq \sum_{k=1}^m \cpolyage{\mtx{\alpha},k} \subset \cnnp{\mtx{\alpha}}. \label{eq:def:c_poly_sage}
\end{equation}

The SAGE polynomial cone can also be described by an appropriate reduction to the SAGE signomial cone.
For a nonnegative $m \times n$ integer matrix $\mtx{\alpha}$ and a vector $\vct{c}$ in $\R^m$, we define the set of \textit{signomial representative coefficient vectors} as
\begin{align}
\sigreps{\mtx{\alpha},\vct{c}} = \{ \vct{\hat{c}} &:\, \hat{c}_i = c_i \text{ whenever } \vct{\alpha}_i  \text{ is in } 2\mathbb{N}^{1 \times n}, \text{ and } \nonumber  \\
&~~~ \hat{c}_i \leq -|c_i| \text{ whenever } \vct{\alpha}_i \text{ is not in } 2\mathbb{N}^{1 \times n} \}. \nonumber
\end{align}
The name ``signomial representative'' derives from the fact that if $\vct{\hat{c}}$ belongs to $\sigreps{\mtx{\alpha},\vct{c}}$, then nonnegativity of the signomial $ \Sig(\mtx{\alpha},\vct{\hat{c}})$ would evidently imply nonnegativity of the polynomial $\Poly(\mtx{\alpha},\vct{c})$ (see Section 5.1 of \cite{SAGE2}).
The set $\sigreps{\mtx{\alpha},\vct{c}}$ is useful because a constraint of the form ``$\vct{\hat{c}}$ belongs to $\sigreps{\mtx{\alpha},\vct{c}}$'' is jointly convex in $\vct{\hat{c}}$ and $\vct{c}$.
Lemma 19 of \cite{SAGE2} proves that the SAGE polynomial cone defined by Equation \ref{eq:def:c_poly_sage} is equivalently given by
\begin{align*}
\cpolysage{\mtx{\alpha}} = \{ \vct{c} ~:&~ \sigreps{\mtx{\alpha},\vct{c}} \cap \csage{\mtx{\alpha}} \text{ is nonempty } \}.
\end{align*}
The generalization of SAGE polynomials considered in this article benefits from both the Sum-of-AGE-function and signomial-representative viewpoints of ordinary SAGE polynomials.

Lastly we consider \textit{Sums-of-Squares} (SOS) polynomials.
A polynomial $f$ is said to be SOS if it can be written in the form $f = \sum_{i=1}^m f_i^2$ for appropriate polynomials $f_i$.
In the context of polynomial optimization, one usually parameterizes the SOS cone by a number of variables $n$ and a maximum degree $2d$; this cone can be represented as
\[
\mathrm{SOS}(n, 2d) = \{ p \,:\, p(\vct{x}) = L^n_d(\vct{x})^\intercal\mtx{M}L^n_d(\vct{x}),~ \mtx{M} \succeq \mtx{0} \}
\]
where $L^n_d : \mathbb{R}^n  \to \R^{ {n + d \choose d} }$ is the map from a vector $\vct{x}$ to the vector of all monomials of degree at-most-$d$ evaluated at $\vct{x}$.
The connection between SOS-representability and semidefinite programming was first observed by Shor \cite{shor}, and was subsequently developed by Parrilo \cite{ParriloPhD} and Lasserre \cite{Lasserre2001}.

\subsection{Strengthening dual bounds in nonnegativity relaxations}\label{sec:background:strengthen}

A common method for strengthening dual problems is to introduce redundant constraints to the primal problem, particularly by taking products of existing constraint functions.
As an example of this principle in action, consider the toy polynomial optimization problem
\[
\inf\{\, -x^2 \,:\, -1 \leq x \leq 1 \, \} = -1.
\]
One may verify that $(f,g)^{\mathrm{L}}_{\R} = -\infty$, but by adding the single redundant constraint $(1-x)(1+x) \geq 0$, we can certify a dual bound $-1 \leq (f,g)_{\R}^\star$.

A more subtle method is to reconsider what is meant by ``dual variables.''
For the Lagrange dual problem we use scalars $\lambda_i \geq 0$, however it would be just as valid to have $\lambda_i$ be a \textit{function}, provided that it was nonnegative over $\R^n$. 
Such a method is well-suited to our nonnegativity-based relaxations of the dual problem.
The following toy signomial program illustrates the utility of this approach
\begin{equation*}
\inf\{ -\exp(2x) ~:~ 1 \leq \exp(x) \leq 2\} = -4. \label{eq:background:toySP}
\end{equation*}
Again the Lagrange dual problem returns a bound of $-\infty$, but by considering $\lambda_i$ of the form $\lambda_i(x) = \eta_i \exp(x)$ with $\eta_i \geq 0$, the resulting dual bound is $-4 \leq (f,g)_{\R}^\star$.

Our third method for strengthening dual bounds only becomes relevant when working with strict inner-approximations of nonnegativity cones.
For two functions $w, f$ with $w$ positive definite, it is clear that $f$ is nonnegative if and only if the product $w \cdot f$ is nonnegative.
The \textit{method of modulation} is to choose a generic positive-definite function $w$ so that if $f$ fails a particular test for nonnegativity (say, being SOS, or being SAGE), there is still a chance that the product $w \cdot f$ passes a test for nonnegativity.
Indeed, modulation is a crucial tool for computing successive bounds for unconstrained problems
\[
f^\star_{\R^n} \doteq \inf\{ f(\vct{x}) : \vct{x} \inn \R^n\} = \sup\{ \gamma : f(\vct{x}) - \gamma \geq 0 \foralll \vct{x} \inn \R^n \}.
\]
Suppose for example that $f$ is a signomial over exponents $\mtx{\alpha}$; then for $w = \Sig(\mtx{\alpha},\vct{1})$ we can compute a non-decreasing sequence of lower bounds
\[
f^{(\ell)}_{\R^n} = \sup\{ \gamma : \gamma \inn \R,~ w^\ell (f - \gamma) \text{ is SAGE} \} \leq f^\star_{\R^n}.
\]
Under appropriate conditions on $\mtx{\alpha}$ (c.f. \cite{SAGE1}), these lower bounds converge to $f^\star_{\R^n}$ as $\ell$ goes to infinity.
From an implementation perspective, the constraint that ``$\psi(\gamma) \doteq w^\ell(f - \gamma)$ is SAGE'' is tractable because the coefficient vector of $\psi(\gamma)$ is an affine function of $\gamma$.

Modulation can similarly be applied to constrained optimization.
Suppose that $\mathcal{L}(\vct{x},\vct{\lambda})$ is the Lagrangian for Problem \ref{eq:background:ineqConstrainedMin}, and refer to the function $\vct{x} \mapsto \mathcal{L}(\vct{x},\vct{\lambda})$ as $\mathcal{L}(\vct{\lambda})$.
Then rather than requiring that ``$\mathcal{L}(\vct{\lambda}) - \gamma$ is SAGE'', one could require that ``$\psi(\gamma,\vct{\lambda}) \doteq w^\ell( \mathcal{L}(\vct{\lambda}) - \gamma ) \text{ is SAGE}$.''
This would increase the size of the feasible set for variables $\gamma$ and $\vct{\lambda}$, and remain tractable due to the affine dependence of $\psi(\gamma, \vct{\lambda})$ on $\gamma$ and $\vct{\lambda}$.
Such modulation leads to a non-decreasing sequence of bounds which converge to $(f,g)_{\R^n}^{\mathrm{L}}$ under suitable conditions.

\subsection{Partial dualization}\label{sec:background:partial_dualization}

A \textit{partial dual problem} is what results when the set ``$X$'' in Problem \ref{eq:background:ineqConstrainedMin} is a proper subset of $\R^n$.
In this case the natural generalization of the Lagrange dual is
\begin{equation}
(f, g)_{X}^{\mathrm{d}} \doteq \sup\{\, \gamma \,:\, \vct{\lambda} \geq \vct{0},~\gamma \inn \R,~ \mathcal{L}(\vct{x},\vct{\lambda})- \gamma \geq 0 \foralll \vct{x} \inn X \}. \label{eq:partialDualDef}
\end{equation}
The technique of \textit{partial dualization} refers to the deliberate choice to restrict the Lagrangian to $X = \{ \vct{x} : g_i(\vct{x}) \geq 0 \foralll i \inn \mathcal{I}\}$ for some $\mathcal{I} \subset [k]$, even when the constraints $\{g_i\}_{i \in \mathcal{I}}$ are of a functional form that is permitted in the Lagrangian.
Note that in the extreme case with $X = \{ \vct{x} : g(\vct{x}) \geq \vct{0}\}$, we are certain to have $(f, 0)^{\mathrm{d}}_{X} = (f,g)^\star_{\R^n}$ --
in this way, partial dualization provides a mechanism to completely eliminate duality gaps.

Before getting into how SAGE certificates integrate with partial dualization, it is worth considering a simple example which combines partial dualization and nonnegativity certificates.
Suppose we want to minimize a univariate polynomial $f$ over an interval $[a, b]$, subject to a single polynomial equality constraint $g(x) = 0$.
In this case we could form a Lagrangian $\mathcal{L}(x,\mu) = f(x) - \mu g(x)$ with $\mu \in \R$, and find the largest constant $\gamma$ so that $\mathcal{L}(x,\mu) - \gamma$ was nonnegative over $x \in [a, b]$.
A well-known result in real algebraic geometry is that a degree-$d$ polynomial ``$p$'' is nonnegative over an interval $[a, b]$ if and only if $p$ can be written as 
$p(x) = s(x)^2 + h_{[a,b]}(x) t(x)^2$,
where $h_{[a, b]}(x) = (b-x)(x-a)$, and $s$, $t$ are polynomials of degree at most $d$ and $d - 1$ respectively \cite{Powers2000}.
Therefore the partial dual problem
\[
(f,g)_{[a,b]}^\mathrm{d} = \sup\{ \gamma \,:\, \gamma, \mu \in \R, ~ f(x) - \mu g(x) - \gamma  \geq 0 \foralll x \inn [a, b] \}
\]
can be framed as an SOS relaxation
\begin{align*}
(f,g)_{[a,b]}^{\mathrm{d}} = \sup\{ \gamma \,:\,& f - \mu g - \gamma = s + h_{[a, b]} t \\
                                                &s \in \mathrm{SOS}(1,2d), ~ t \in \mathrm{SOS}(1, 2(d-1)) \}.
\end{align*}

Our last key concept is how partial dualization manifests in the dual of the dual.
To develop this idea, consider $f = \Poly(\mtx{\alpha},\vct{c})$ with $\vct{\alpha}_1 = \vct{0}$, along with a set $X \subset \R^n$.
The problem of computing $f_X^\star \doteq \inf_{\vct{x} \in X} f(\vct{x})$ has the following convex formulation
\[
    f_X^\star = \sup\{ \, \gamma \,:\, \vct{c} - \gamma (1,0,\ldots,0) ~\in~ \cnnp{\mtx{\alpha},X} \}.
\]
Of course, the above problem is intractable unless $\mtx{\alpha}$ and $X$ satisfy very special conditions.
In spite of the possible intractability, we can still compute the dual problem by applying standard rules of conic duality.
The result of this process is
\[
f_X^\star = \inf\{ \vct{c}^\intercal \vct{v} ~:~ \vct{v} \in \cnnp{\mtx{\alpha},X}^\dagger,~ v_1 = 1 \}
\]
where  $\cnnp{\mtx{\alpha},X}^\dagger$ is the dual cone to $\cnnp{\mtx{\alpha},X}$.
This second problem is what we mean by ``the dual of the dual.''
It appears prominently in the literature on polynomial optimization, where it is usually referred to as a \textit{moment relaxation} \cite{Lasserre2001}.
The term ``moment relaxation'' derives from the fact that $\cnnp{\mtx{\alpha},\R^n}^\dagger$ is the smallest closed convex cone containing the vectors 
\[
 (\R^n)^{\mtx{\alpha}} \doteq \{ (\vct{x}^{\vct{\alpha}_1},\ldots,\vct{x}^{\vct{\alpha}_m}) \,:\, \vct{x} \in \R^n \},
\]
and by thinking of a convex hull as computing expectations $\mathbb{E}_{\vct{x} \sim F}[(\vct{x}^{\vct{\alpha}_1},\ldots,\vct{x}^{\vct{\alpha}_m})]$, where $F$ is a probability measure over $\R^n$.
One can similarly understand the dual of a nonnegativty-based partial-dual problem in terms of probability and moment relaxations.
When $X$ as a proper subset of $\R^n$, the convex hull of $X^{\mtx{\alpha}}$ can be framed as the set of all vector-valued expectations $\mathbb{E}_{\vct{x} \sim F}[(\vct{x}^{\vct{\alpha}_1},\ldots,\vct{x}^{\vct{\alpha}_m})]$, where $F$ is a probability measure over $X$.
In this way, the ``dual'' of partial dualization can be understood in terms of \textit{conditional moments}.

\section{Conditional SAGE certificates for signomials}\label{sec:condsage_sigs}

In this section we show how SAGE certificates for signomial nonnegativity can fully leverage partial dualization, in the sense that any efficiently representable convex set $X$ gives rise to a parameterized and efficiently representable ``$X$-SAGE'' nonnegativity cone.
The efficient representation of the $X$-SAGE cones (which we often call ``conditional SAGE cones'') leads to a practical, principled approach for solving and approximating a range of nonconvex signomial optimization problems.
In this regard the most common sets $X$ are of the form $\{ \vct{x} : g(\vct{x}) \leq \vct{1} \}$ for signomials $g_i$ with all nonnegative coefficients.
An algorithm for solution recovery, and two worked examples are provided.

\subsection{The conditional SAGE signomial cones}\label{sec:condsage_sigs:main}

\begin{definition}[Conditional AGE signomial cones]\label{def:sig_cond_age}
For a matrix $\mtx{\alpha}$ in $\R^{m \times n}$, a subset $X$ of $\R^n$, and an index $k$ in $[m]$, the $k^{\text{th}}$ \text{AGE cone} with respect to $\mtx{\alpha}, X$ is
\[
\cage{\mtx{\alpha},k,X} = \{ \vct{c} \in \R^m \,:\, \vct{c}_{\setminus k} \geq \vct{0} \text{ and } \Sig(\mtx{\alpha},\vct{c})(\vct{x}) \geq 0 \foralll \vct{x} \inn X \}.
\]
\end{definition}

\begin{definition}[$X$-SAGE signomials]\label{def:sig_Xusage}
If the vector $\vct{c}$ belongs to
\[
\csage{\mtx{\alpha},X} \doteq \sum_{k=1}^m \cage{\mtx{\alpha},k,X}
\]
then $f = \Sig(\mtx{\alpha},\vct{c})$ is an $X$-SAGE signomial.
\end{definition}

Conditional SAGE cones are order-reversing with respect to the second argument.
That is, if $X_2 \subset X_1 \subset \R^n$, then $\csage{\mtx{\alpha},X_1} \subset \csage{\mtx{\alpha},X_2}$ for all $\mtx{\alpha}$ in $\R^{m \times n}$.
Note that $\csage{\mtx{\alpha},X}$ is defined for arbitrary $X \subset \R^n$, including nonconvex sets, and convex sets which admit no efficient description. Moreover, as a mathematical object, $\csage{\mtx{\alpha},X}$ does not depend on the representation of $X$.

In practice we need conditions on $X$ in order to optimize over $\csage{\mtx{\alpha},X}$.
But before we get to those, it is worth mentioning some abstract results concerning optimization.
For $f = \Sig(\mtx{\alpha}, \vct{c})$ with $\vct{\alpha}_1 = \vct{0}$, define
\[
f^{\mathrm{SAGE}}_{X} \doteq \sup\{ \, \gamma \,:\, \gamma \inn \R,~ \vct{c} - \gamma (1,0,\ldots,0)  \inn \csage{\mtx{\alpha},X} \}
\]
so that $f^{\mathrm{SAGE}}_{X} \leq f_X^\star \doteq \inf\{ f(\vct{x}) : \vct{x} \inn X \}$.

\begin{theorem}\label{thm:sigsage_exact_for_posynomials}
If $\vct{c} \geq \vct{0}$, then $f = \Sig(\mtx{\alpha},\vct{c})$ has $f^{\mathrm{SAGE}}_{X} = f_X^\star$ for all $X \subset \R^n$.
\end{theorem}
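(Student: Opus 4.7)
The plan is to sandwich $f^{\mathrm{SAGE}}_X$ between $f^\star_X$ from both sides. The inequality $f^{\mathrm{SAGE}}_X \leq f^\star_X$ is immediate from the containment $\csage{\mtx{\alpha},X} \subseteq \cnns{\mtx{\alpha},X}$: any $\gamma$ feasible for the SAGE relaxation makes $\Sig(\mtx{\alpha},\vct{c} - \gamma(1,0,\ldots,0)) = f - \gamma$ nonnegative on $X$, which forces $\gamma \leq f^\star_X$.

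For the reverse inequality, the key observation is that under the hypothesis $\vct{c} \geq \vct{0}$, the shifted vector $\tilde{\vct{c}} \doteq \vct{c} - f^\star_X \cdot (1,0,\ldots,0)$ lies \emph{directly} in the single AGE cone $\cage{\mtx{\alpha},1,X}$; no nontrivial sum-of-AGE decomposition is needed. Assuming $X$ is nonempty (the empty case is trivial, since both quantities are $+\infty$ and every condition becomes vacuous), $f^\star_X$ is finite, and two checks suffice. First, $\tilde{c}_i = c_i \geq 0$ for every $i \geq 2$ by hypothesis, so $\tilde{\vct{c}}_{\setminus 1} \geq \vct{0}$. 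Second, by the very definition of the infimum, $\Sig(\mtx{\alpha},\tilde{\vct{c}})(\vct{x}) = f(\vct{x}) - f^\star_X \geq 0$ for all $\vct{x} \in X$. These are exactly the conditions in Definition \ref{def:sig_cond_age} for membership in $\cage{\mtx{\alpha},1,X} \subseteq \csage{\mtx{\alpha},X}$, so $\gamma = f^\star_X$ is feasible for the SAGE problem and we conclude $f^\star_X \leq f^{\mathrm{SAGE}}_X$.

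I do not anticipate any serious obstacle. The one subtlety worth flagging is that $\cage{\mtx{\alpha},1,X}$ constrains only the coordinates indexed by $\{2,\ldots,m\}$; no sign restriction is placed on the first coordinate. Thus we never need to verify that $\tilde{c}_1 = c_1 - f^\star_X$ is nonpositive---though in fact it is, since $\vct{c} \geq \vct{0}$ together with $\vct{\alpha}_1 = \vct{0}$ forces $f \geq c_1$ pointwise, hence $f^\star_X \geq c_1$. Notably, the argument uses neither the relative entropy representation \eqref{eq:background:ageRelEnt} nor any regularity assumption on $\mtx{\alpha}$ or $X$; it is a purely structural consequence of the definitions.
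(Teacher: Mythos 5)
Your proof is correct and takes essentially the same route as the paper: both arguments shift the coefficient vector to $\tilde{\vct{c}} = \vct{c} - f_X^\star\vct{e}_1$, observe that it has at most one negative entry and that $\Sig(\mtx{\alpha},\tilde{\vct{c}})$ is nonnegative on $X$ by definition of the infimum, and conclude membership in $\cage{\mtx{\alpha},1,X} \subset \csage{\mtx{\alpha},X}$. Your added handling of the empty-$X$ case and the explicit verification of the easy inequality $f^{\mathrm{SAGE}}_X \leq f_X^\star$ (which the paper folds into the definition) are harmless refinements of the same argument.
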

\begin{proof}
Let $\vct{e}_1 = (1,0,\ldots,0)$.
The signomial $\tilde{f} = \Sig(\mtx{\alpha}, \vct{c} - f_X^\star \vct{e}_1)$ is nonnegative over $X$, and its coefficient vector $\vct{c} - f_X^\star \vct{e}_1$ contains at most one negative entry. This implies that $\tilde{f}$ is $X$-AGE, and hence $X$-SAGE.
\end{proof}

\begin{theorem}\label{thm:sigsage_bounded_error}
If $X$ is bounded, then $f^{\mathrm{SAGE}}_{X} > -\infty$ for every signomial $f$.
\end{theorem}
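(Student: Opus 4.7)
The plan is to explicitly exhibit a feasible $\gamma \in \R$ for the optimization problem defining $f^{\mathrm{SAGE}}_X$, which immediately implies $f^{\mathrm{SAGE}}_X \geq \gamma > -\infty$. Since $\vct{\alpha}_1 = \vct{0}$ is built into the definition of $f^{\mathrm{SAGE}}_X$, and since $\csage{\mtx{\alpha},X} = \sum_{k=1}^m \cage{\mtx{\alpha},k,X}$, it suffices to construct a specific AGE decomposition of $\vct{c} - \gamma \vct{e}_1$ for one concrete finite $\gamma$, where $\vct{e}_1 = (1,0,\ldots,0)$.

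The key step leverages boundedness of $X$: for each $k \in \{2,\ldots,m\}$ the map $\vct{x} \mapsto \exp(\vct{\alpha}_k \cdot \vct{x})$ is continuous on the bounded set $X$, so $B_k \doteq \sup_{\vct{x} \in X} \exp(\vct{\alpha}_k \cdot \vct{x})$ is finite. I would then set $M_k \doteq \max\{0, -c_k B_k\} \geq 0$, which by construction satisfies $M_k + c_k \exp(\vct{\alpha}_k \cdot \vct{x}) \geq 0$ for all $\vct{x} \in X$. The two-term coefficient vector $\vct{c}^{(k)} \doteq c_k \vct{e}_k + M_k \vct{e}_1$ has only $c_k$ as a possibly negative entry, and $\Sig(\mtx{\alpha}, \vct{c}^{(k)})$ is nonnegative on $X$, so $\vct{c}^{(k)} \in \cage{\mtx{\alpha},k,X}$ directly from Definition \ref{def:sig_cond_age}.

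Finally, I would choose $\gamma \doteq c_1 - \sum_{k=2}^m M_k$ and take $\vct{c}^{(1)} \doteq \vct{0} \in \cage{\mtx{\alpha},1,X}$ (trivially valid). Summing yields $\sum_{k=1}^m \vct{c}^{(k)} = \bigl(\sum_{k=2}^m M_k\bigr)\vct{e}_1 + \sum_{k=2}^m c_k \vct{e}_k = \vct{c} - \gamma \vct{e}_1$, which certifies that this $\gamma$ is feasible.

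I do not anticipate any genuine obstacle. Boundedness of $X$ is used exactly once — to ensure each $B_k$ is finite — and no further structural assumptions on $X$ (convexity, closedness, tractable description) enter. The argument is completely constructive, and in fact shows that the trivial pointwise lower bound $c_1 - \sum_k M_k$ on $f$ is already realized by a two-term $X$-AGE decomposition. The main subtlety worth stating explicitly is just that $\vct{c}^{(1)}$ is allowed to be zero, so the constant-index AGE cone places no further obligation.
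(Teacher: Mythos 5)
Your proof is correct and takes essentially the same route as the paper: the paper's argument reduces to single-term signomials and bounds each such term below by a constant using boundedness of $X$, which is exactly the termwise two-term $X$-AGE construction you write out explicitly (your $\vct{c}^{(k)} = c_k\vct{e}_k + M_k\vct{e}_1$ is the coefficient-vector form of the paper's $\tilde{f}(\vct{x}) = c\exp(\vct{a}\cdot\vct{x}) + L$). The only omission is the trivial case $X = \emptyset$, where your suprema $B_k$ are ill-defined but the theorem holds vacuously since $\csage{\mtx{\alpha},\emptyset} = \R^m$; the paper dispatches this case in one line before its main argument, and your proof should do the same.
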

\begin{proof}
If $X$ is empty then the result follows by verifying that $\csage{\mtx{\alpha},X} = \R^m$.
Consider the case when $X$ is nonempty.
In this situation it suffices to prove the result for all $f$ of the form $f(\vct{x}) = c\exp(\vct{a} \cdot \vct{x})$ where $c \neq 0$ and $\vct{a}$ belongs to $\R^{1 \times n}$.
Fixing such $c$, $\vct{a}$, the boundedness of $X$ implies the existence of $L \neq 0$ with $\tilde{f}(\vct{x}) = c \exp(\vct{a}^\intercal \vct{x}) + L$ nonnegative over $\vct{x}$ in $X$ and $c L < 0$.
Since $\tilde{f}$ is nonnegative over $X$ and contains exactly one negative coefficient, we have that $f^{\mathrm{SAGE}}_{X} \geq -L$.
\end{proof}

\begin{corollary}[See \cite{SAGE2}]\label{cor:sigsage_sparsity}
Let $X \subset \R^n$ be arbitrary. If $\vct{c}$ is a vector in $\csage{\mtx{\alpha},X}$ with nonempty $\mathcal{N} = \{ i : c_i < 0 \}$, then there exist vectors $\{\vct{c}^{(i)} \}_{i \in \mathcal{N}}$ satisfying
\[
 \vct{c}^{(i)} \in \cage{\mtx{\alpha},i,X} \qquad \vct{c} = \sum_{i \in \mathcal{N}} \vct{c}^{(i)} \quad \text{ and } \quad c^{(i)}_j = 0 \foralll j \neq i \inn \mathcal{N}.
\]
\end{corollary}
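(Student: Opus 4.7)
The plan is to start with an arbitrary AGE decomposition $\vct{c} = \sum_{k=1}^{m} \vct{c}^{(k)}$ with each $\vct{c}^{(k)} \in \cage{\mtx{\alpha}, k, X}$---guaranteed to exist by Definition~\ref{def:sig_Xusage}---and then reorganize it into a decomposition indexed only by $\mathcal{N}$ satisfying the prescribed off-diagonal sparsity. The number of AGE terms should drop from $m$ to $|\mathcal{N}|$, one per negative entry of $\vct{c}$, and each surviving term should be zero at the other negative-entry positions.

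The cornerstone observation is that $\R^{m}_+ \subseteq \cage{\mtx{\alpha}, i, X}$ for every index $i$: any nonnegative coefficient vector produces a signomial that is nonnegative on all of $\R^n$, hence on $X$, and the defining inequalities $\vct{v}_{\setminus i} \geq \vct{0}$ are trivially satisfied. It follows that each AGE cone is closed under addition of nonnegative vectors, so nonnegative mass can be reassigned freely among AGE cones without breaking the AGE property.

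With that in hand, I would first eliminate each $\vct{c}^{(k)}$ with $k \notin \mathcal{N}$. When $c^{(k)}_k \geq 0$ this is immediate: $\vct{c}^{(k)}$ is entirely nonnegative and can be absorbed into any $\vct{c}^{(i)}$ with $i \in \mathcal{N}$. When $c^{(k)}_k < 0$, the aggregate identity $c_k = \sum_\ell c^{(\ell)}_k \geq 0$ together with $c^{(\ell)}_k \geq 0$ for $\ell \neq k$ forces $\sum_{\ell \neq k} c^{(\ell)}_k \geq |c^{(k)}_k|$, so the positive mass carried by other columns at row $k$ is enough to cover the deficit; I would merge $\vct{c}^{(k)}$ with a convex combination of those columns so that the resulting AGE vector has its unique negative entry in $\mathcal{N}$. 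An analogous merging step then enforces $c^{(i)}_j = 0$ for distinct $i, j \in \mathcal{N}$, using the nonnegativity of the off-diagonal entries to transfer each $c^{(i)}_j \vct{e}_j$ into the column already labeled by $j$.

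The main obstacle is the asymmetry of AGE cones: they absorb nonnegative vectors but are not closed under subtraction, so a naive ``remove from one column, add to another'' transfer can kill the nonnegativity of an AGE signomial on $X$. I would side-step this by realizing every rearrangement as an addition at the destination column, exploiting the slack identity $c_k = \sum_\ell c^{(\ell)}_k$ at each $k \notin \mathcal{N}$ to ensure that no AGE vector is ever weakened; an induction on the number of columns indexed outside $\mathcal{N}$ (and then on the number of nonzero off-$\mathcal{N}$-diagonal entries) terminates in the decomposition claimed by the corollary.
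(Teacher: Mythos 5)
Your first phase is essentially correct, and it is worth confirming why: if $k \notin \mathcal{N}$ and $c^{(k)}_k < 0$, set $S = \sum_{\ell \neq k} c^{(\ell)}_k \geq -c^{(k)}_k > 0$ and distribute the whole column proportionally, $\vct{\tilde{c}}^{(\ell)} = \vct{c}^{(\ell)} + \bigl(c^{(\ell)}_k / S\bigr)\, \vct{c}^{(k)}$. Each $\vct{\tilde{c}}^{(\ell)}$ is a conic combination of signomials nonnegative on $X$, its off-$\ell$ entries away from position $k$ stay nonnegative, and its entry at position $k$ equals $c^{(\ell)}_k c_k / S \geq 0$ precisely because $c_k \geq 0$ for $k \notin \mathcal{N}$. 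This is purely additive, uses only the slack at coordinates outside $\mathcal{N}$, and an induction on the number of such columns goes through.

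The genuine gap is your second phase. To force $c^{(i)}_j = 0$ for distinct $i, j \in \mathcal{N}$ there is no slack to exploit: $c_j < 0$ while every off-diagonal contribution $c^{(i)}_j$ is nonnegative, so zeroing these entries requires literally subtracting $c^{(i)}_j \vct{e}_j$ from column $i$ --- exactly the operation you flagged as dangerous --- and your proposed side-step (``realize every rearrangement as an addition at the destination column, exploiting the slack identity at each $k \notin \mathcal{N}$'') has nothing to act on, since the coordinates in question all lie in $\mathcal{N}$. The subtraction really can fail: with rows $\vct{\alpha} = (1, -1, 0)^\intercal$ and $\vct{c} = (1, 1, -2)$, the signomial $e^{x} + e^{-x} - 2$ is $X$-AGE for any $X$ containing $\vct{0}$, yet deleting the positive middle coefficient leaves $e^{x} - 2$, which is negative at $\vct{0}$. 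Merging whole columns does not rescue this: absorbing all of $\vct{c}^{(i)}$ into column $j$ can plant the negative entry $c^{(i)}_i$ at position $i \neq j$, violating the sign pattern of $\cage{\mtx{\alpha},j,X}$. What this step actually requires is the relative-entropy description of the AGE cones: given a witness $\vct{\nu}$ for column $i$, any entry $c^{(i)}_j$ with $\nu_j = 0$ contributes nothing to $\relent{\vct{\nu}}{\vct{c}^{(i)}_{\setminus i}}$ and can be deleted for free, while the case $\nu_j > 0$ needs a further exchange argument --- this is the content of Lemmas 6 and 7 of \cite{SAGE2}, which the paper does not reprove but simply ports: its proof of the corollary is the observation that Theorem 2 of \cite{SAGE2} and its supporting lemmas hold verbatim with $\cage{\mtx{\alpha},i}$ replaced by $\cage{\mtx{\alpha},i,X}$, since nothing in that argument uses $X = \R^n$ specifically. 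As written, your induction stalls at the first pair $i \neq j$ in $\mathcal{N}$ with $c^{(i)}_j > 0$.
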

\begin{proof}
This is simply the statement of Theorem 2 from \cite{SAGE2}, which was proven for ordinary SAGE cones, i.e. with $X = \R^n$.
The entire proof of that theorem (including Lemmas 6 and 7 of \cite{SAGE2}) extends to conditional SAGE cones simply by replacing references to ``$\cage{\mtx{\alpha},i}$'' and ``$\csage{\mtx{\alpha}}$'' with ``$\cage{\mtx{\alpha},i,X}$'' and ``$\csage{\mtx{\alpha},X}$'' respectively.
\end{proof}

In order to reliably optimize over $\csage{\mtx{\alpha},X}$, we need $X$ to be a tractable convex set.
This is essentially the only requirement on $X$, as is shown by the following theorem.

\begin{theorem}\label{thm:sigsage_represent_age}
For a matrix $\mtx{\alpha}$ in $\R^{m \times n}$, an index $i$ in $[m]$, and a convex set $X \subset \R^n$ with support function $\sigma_X(\vct{\lambda}) \doteq \sup_{\vct{x} \in X} \vct{\lambda}^\intercal \vct{x} $, we have
\begin{align*}
	\cage{\mtx{\alpha},i,X} = \{ \vct{c} :&~  \vct{\nu} \text{ in } \mathbb{R}^{m-1}, \vct{c} \text{ in } \mathbb{R}^m, \vct{\lambda} \text{ in } \R^n \text{ satisfy} \nonumber \\
	&~ \sigma_X(\vct{\lambda}) + \relent{\vct{\nu}}{\vct{c}_{\setminus i}} - \vct{\nu}^\intercal \vct{1} \leq c_i, \nonumber \\
	&~[\mtx{\alpha}_{\setminus i} - \vct{1}\vct{\alpha}_i]^\intercal\vct{\nu} +  \vct{\lambda} = \vct{0},\text{ and } \vct{c}_{\setminus i} \geq \vct{0} \}.
\end{align*}
\end{theorem}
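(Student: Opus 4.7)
The plan is to prove equivalence by dualizing the nonnegativity condition ``$\Sig(\mtx{\alpha},\vct{c}) \geq 0$ on $X$'' via Fenchel--Rockafellar duality. Given $\vct{c}_{\setminus i} \geq \vct{0}$, I would first factor out the positive quantity $\exp(\vct{\alpha}_i \cdot \vct{x})$ to rewrite the condition as $c_i + h(\vct{x}) \geq 0$ for all $\vct{x} \in X$, where $h(\vct{x}) = \sum_{j \neq i} c_j \exp(\vct{\beta}_j \cdot \vct{x})$ with rows $\vct{\beta}_j = \vct{\alpha}_j - \vct{\alpha}_i$ assembled into $B = \mtx{\alpha}_{\setminus i} - \vct{1}\vct{\alpha}_i$. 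This reduces the theorem to showing that $c_i \geq -\inf_{\vct{x} \in X} h(\vct{x})$ holds iff $(\vct{\nu},\vct{\lambda})$ meeting the listed constraints exist.

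For the easy direction (existence of $(\vct{\nu},\vct{\lambda})$ implies membership in $\cage{\mtx{\alpha},i,X}$), I would appeal to the scalar Young-type inequality $c\,e^t \geq t\mu + \mu - \mu \log(\mu/c)$, valid for $c > 0$ and $\mu \geq 0$. Summing this over $j \neq i$ with $t_j = \vct{\beta}_j \cdot \vct{x}$ and $\mu_j = \nu_j$ yields
\[
h(\vct{x}) \;\geq\; (B^\intercal \vct{\nu})^\intercal \vct{x} + \vct{\nu}^\intercal \vct{1} - \relent{\vct{\nu}}{\vct{c}_{\setminus i}} \;=\; -\vct{\lambda}^\intercal \vct{x} + \vct{\nu}^\intercal \vct{1} - \relent{\vct{\nu}}{\vct{c}_{\setminus i}},
\]
invoking $[\mtx{\alpha}_{\setminus i} - \vct{1}\vct{\alpha}_i]^\intercal \vct{\nu} + \vct{\lambda} = \vct{0}$. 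For $\vct{x} \in X$, $-\vct{\lambda}^\intercal \vct{x} \geq -\sigma_X(\vct{\lambda})$, so adding $c_i$ and using the main inequality gives $c_i + h(\vct{x}) \geq 0$, hence $\Sig(\mtx{\alpha}, \vct{c})(\vct{x}) \geq 0$.

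For the harder converse, I would apply Fenchel--Rockafellar duality to $\inf_{\vct{x}}\{F(B\vct{x}) + \delta_X(\vct{x})\}$ where $F(\vct{t}) = \sum_{j \neq i} c_j \exp(t_j)$. A term-by-term conjugate computation gives $F^*(\vct{\nu}) = \relent{\vct{\nu}}{\vct{c}_{\setminus i}} - \vct{\nu}^\intercal \vct{1}$ on $\R^{m-1}_+$ (and $+\infty$ elsewhere), while $\delta_X^* = \sigma_X$. The resulting dual
\[
\sup_{\vct{\nu} \geq \vct{0}} \left[\vct{\nu}^\intercal \vct{1} - \relent{\vct{\nu}}{\vct{c}_{\setminus i}} - \sigma_X(-B^\intercal \vct{\nu})\right]
\]
matches the listed constraints after the substitution $\vct{\lambda} = -B^\intercal \vct{\nu}$: the assertion $c_i \geq -\inf_{\vct{x}\in X} h(\vct{x})$ becomes the existence of a feasible $(\vct{\nu}, \vct{\lambda})$ witnessing $\sigma_X(\vct{\lambda}) + \relent{\vct{\nu}}{\vct{c}_{\setminus i}} - \vct{\nu}^\intercal \vct{1} \leq c_i$.

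The main obstacle is verifying that strong duality holds with attainment of the dual supremum. When $c_i > -\inf_{\vct{x}\in X} h(\vct{x})$, a witness $(\vct{\nu},\vct{\lambda})$ exists by the definition of supremum; the boundary case $c_i = -\inf_{\vct{x}\in X} h(\vct{x})$ requires a constraint-qualification argument. Here I would note that $F \circ B$ has full domain $\R^n$ and is continuous everywhere, so any nonempty $X$ trivially satisfies the standard relative-interior condition required for Fenchel--Rockafellar strong duality with dual attainment. Degeneracies --- empty $X$ (trivial), zero entries of $\vct{c}_{\setminus i}$ (forcing the corresponding $\nu_j = 0$ under the $D$ convention), and unbounded duals corresponding to signomials that are not $X$-AGE --- can be handled by short case analysis and do not affect the overall conclusion.
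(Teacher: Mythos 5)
Your proposal is correct and follows essentially the same route as the paper: both reduce membership in $\cage{\mtx{\alpha},i,X}$ to bounding $\inf_{\vct{x}}\{\delta_X(\vct{x}) + \sum_{j\neq i} c_j \exp(\vct{\beta}_j\cdot\vct{x})\}$ and apply Fenchel duality, with the constraint qualification (the paper cites Corollary 3.3.11 of Borwein--Lewis, you invoke the equivalent relative-interior condition via full domain and continuity of the exponential sum) supplying strong duality and dual attainment for the boundary case, and the empty-$X$ case handled separately. Your explicit Young-inequality argument for the easy direction is just weak duality made concrete, which the paper folds into the duality statement.
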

\begin{proof}
Let $\delta_X$ denote the indicator function of $X$, taking values
\[
    \delta_X(\vct{x}) = \begin{cases} 0 &\text{ if } \vct{x} \text{ belongs to } X \\
                                      +\infty &\text{ if } \text{otherwise }
                        \end{cases}.
\]
A vector $\vct{c}$ with $\vct{c}_{\setminus i} \geq \vct{0}$ belongs to $\cage{\mtx{\alpha},i,X}$ if and only if
\begin{equation}
 p^\star = \inf\{ \delta_X(\vct{x}) + \textstyle\sum_{i=1}^\ell \tilde{c}_i \exp t_i \,:\, \vct{x} \in \R^n, ~ \vct{t} \in \R^\ell, ~ \vct{t} = \mtx{W}\vct{x} \} \geq -L \label{eq:primal_age_conic}
\end{equation}
for $\ell = m - 1$, $\mtx{W} = [\mtx{\alpha}_{\setminus i} - \vct{1}\vct{\alpha}_i] \in \R^{\ell \times n}$, $\vct{\tilde{c}} = \vct{c}_{\setminus i} \in \R^{\ell}$, and $L = c_i$.

The dual to the above optimization problem is easily calculated by applying Fenchel duality (c.f. \cite{Borwein2006}); the result of this process is
\begin{equation}
d^\star = \sup\{ - \sigma_X(\vct{\lambda}) - \relent{\vct{\nu}}{\vct{\tilde{c}}} + \vct{\nu}^\intercal\vct{1} \,:\, \vct{\lambda} \in \R^n, \vct{\nu} \in \R^{m-1}, ~ \mtx{W}^\intercal \vct{\nu} + \vct{\lambda} = \vct{0} \}. \label{eq:dual_age_conic_proof}
\end{equation}
When $X$ is nonempty, one may verify that the hypothesis of Corollary 3.3.11 of \cite{Borwein2006} (concerning strong duality) hold for the primal-dual pair \eqref{eq:primal_age_conic}-\eqref{eq:dual_age_conic_proof}. In particular, $p^\star \geq -L$ holds if and only if $-d^\star \leq L$, and the dual problem attains an optimal solution whenever finite.
When $X$ is empty, it is clear that $p^\star = +\infty$, and by taking both $\vct{\lambda}$ and $\vct{\nu}$ as zero vectors, we have $d^\star = +\infty$.
The result follows.
\end{proof}

Theorem \ref{thm:sigsage_represent_age} is stated in terms of support functions for maximum generality.
From an implementation perspective, it is useful to assume a representation of $X$.
For example, if $X = \{ \vct{x} \,:\, \mtx{A}\vct{x} + \vct{b} \in K \}$ for a matrix $\mtx{A}$, a vector $\vct{b}$, and a convex cone $K$, then weak duality ensures
\[
 \sigma_X(\vct{\lambda}) \doteq \sup\{ \vct{\lambda}^\intercal \vct{x} \,:\, \mtx{A}\vct{x} + \vct{b} \in K \} \leq \inf\{ \vct{b}^\intercal\vct{\eta} \,:\, \mtx{A}^\intercal\vct{\eta} + \vct{\lambda} = \vct{0}, ~ \vct{\eta} \in K^\dagger \}.
\]
An upper bound on the support function is all we need to construct an inner-approximation of a given AGE cone.
For all $X = \{ \vct{x} \,:\, \mtx{A}\vct{x} + \vct{b} \in K \}$, we have
\begin{align*}
	\{ \vct{c} :&~  \vct{\nu} \text{ in } \mathbb{R}^{m-1}, \vct{c} \text{ in } \mathbb{R}^m, \text{ and } \vct{\eta} \text{ in } K^\dagger \nonumber \\
	&~\relent{\vct{\nu}}{\vct{c}_{\setminus i}} - \vct{\nu}^\intercal \vct{1} + \vct{\eta}^\intercal \vct{b} \leq c_i, \nonumber \\
	&~[\mtx{\alpha}_{\setminus i} - \vct{1}\vct{\alpha}_i]^\intercal\vct{\nu} = \mtx{A}^\intercal \vct{\eta},\text{ and } \vct{c}_{\setminus i} \geq \vct{0} \} \subset \cage{\mtx{\alpha},i,X}.
\end{align*}
If there exists an $\vct{x}_0$ so that $\mtx{A}\vct{x}_0 + \vct{b}$ belongs to the relative interior of $K$, then by Slater's condition the reverse inclusion in the preceding expression also holds.

\subsection{Dual perspectives and solution recovery}\label{sec:condsage_sigs:dual}

Here we discuss how dual SAGE relaxations can be used to recover optimal and near-optimal solutions to signomial programs of the form \eqref{eq:background:genericConstrainedMin}.
For concreteness, we state the simplest such relaxation here.
Let $f$, $\{g_i\}_{i=1}^{k_1}$ and $\{ \phi_i\}_{i=1}^{k_2}$ be signomials over exponents $\mtx{\alpha}$, with $\vct{\alpha}_1 = \vct{0}$.
If $\vct{c}$ is the coefficient vector of $f$, and the rows of $\mtx{G} \in \R^{k_1 \times m}$, $\mtx{\Phi} \in \R^{k_2 \times m}$ specify coefficient vectors of $g_i$, $\phi_i$ respectively, then
\begin{equation}
    \inf\{ \vct{c}^\intercal \vct{v} \,: \vct{v} \in \csage{\mtx{\alpha},X}^\dagger,~ v_1 = 1, ~ \mtx{G}\vct{v} \geq \vct{0}, ~ \mtx{\Phi}\vct{v} = \vct{0} \} \label{eq:basic_dual_sage_relaxation}
\end{equation}
is a convex relaxation of Problem \ref{eq:background:genericConstrainedMin}.
It is readily verified that if $\vct{v}^\star$ is an optimal solution to \eqref{eq:basic_dual_sage_relaxation} and $\vct{v}^\star = \exp(\mtx{\alpha}\vct{x})$ for some $\vct{x}$ in $X$, then $\vct{x}$ is optimal for \eqref{eq:background:genericConstrainedMin}.

The prospect of inverting the moment-type vector $\vct{v}^\star$ to obtain a feasible point $\vct{x} \in X$ drives our interest in understanding the dual cone $\csage{\mtx{\alpha},X}^\dagger$.
By standard rules in convex analysis, the dual SAGE cone is given by
\[
\csage{\mtx{\alpha},X}^\dagger = \cap_{i \in [m]} \cage{\mtx{\alpha},i,X}^\dagger.
\]
An expression for the dual AGE cones can be recovered from Theorem \ref{thm:sigsage_represent_age}.
Using $\cone X = \{ (\vct{x}, t) \,:\, t > 0, ~ \vct{x} / t \in X \} $ to denote the cone over $X$, the usual conic-duality calculations yield
\begin{align}
\cage{\mtx{\alpha},i,X}^\dagger = \cl\{ \vct{v} :&~ v_i \log(\vct{v} / v_i) \geq [\mtx{\alpha}- \vct{1}\vct{\alpha}_i] \vct{z} \nonumber \\
&~ (\vct{z}, v_i) \in \cone X, ~\vct{v} \text{ in } \mathbb{R}^m_+, \text{ and } \vct{z} \text{ in } \mathbb{R}^n \}. \label{eq:represent_c_age_i_x_star}
\end{align}
The auxiliary variables ``$\vct{z}$'' appearing in the representation for the dual $X$-AGE cones are a powerful tool for solution recovery.
If $\vct{z}, \vct{v}$ satisfy the constraints in \eqref{eq:represent_c_age_i_x_star} with $v_i > 0$, then $\vct{x} \doteq \vct{z} / v_i$ belongs to $X$.
Additionally, if the inequality constraints involving the logarithm are binding \textit{and} we meet the normalization condition $v_i = \exp( \vct{\alpha}_i \cdot \vct{x})$, then we will have $\exp(\mtx{\alpha}\vct{x}) = \vct{v}$.

These observations form the core -- but not the entirety -- of our solution recovery algorithm.
Depending on solver behavior and dimension-reduction techniques used to simplify a SAGE relaxation, it is possible that $\vct{v} = \exp(\mtx{\alpha}\vct{x})$ for some $\vct{x}$ in $X$, and yet no dual AGE cone generated this value.
Therefore it is also prudent to solve a constrained least-squares problem to find $\vct{x}$ in $X$ with $\mtx{\alpha}\vct{x}$ near $\log \vct{v}$.

\begin{algorithm}[H]
Input: Signomials $f$, $\{g_i\}_{i=1}^{k_1}$, and $\{ \phi_i\}_{i=1}^{k_2}$ over exponents $\mtx{\alpha}$ in $\R^{m \times n}$.
A vector $\vct{v}$ in $\csage{\mtx{\alpha},X}^\dagger$.
Infeasibility tolerances $\epsilon_{\text{ineq}}, \epsilon_{\text{eq}} \geq 0$.
\begin{algorithmic}[1]
\Procedure{SigSolutionRecovery}{$f, g, \phi, \mtx{\alpha}, \vct{v}, \epsilon_{\text{ineq}},\epsilon_{\text{eq}}$}
\State $\mathrm{solutions} \gets []$
\For{$j=1,\ldots,\text{length}(\vct{v})$}
\State Recover $\vct{z}$ in $\R^n$ s.t. $v_j \log(\vct{v} / v_j) \geq [\mtx{\alpha}- \vct{1}\vct{\alpha}_j] \vct{z}~$ and $~(\vct{z}, v_j) \in \cone X$.
\State $\vct{x} \gets \vct{z} / v_j$.
\State $\mathrm{solutions}.\text{append}( \vct{x} )$.
\EndFor
\If{ $\mtx{\alpha}\vct{x} \neq \log \vct{v}$ for all $\vct{x}$ in $\mathrm{solutions}$}
    \State Compute $\vct{x}_{\mathrm{ls}} \inn \text{argmin}\{\|\log \vct{v} - \mtx{\alpha}\vct{x} \| \,:\, \vct{x} \inn X \}$.
    \State $\mathrm{solutions}.\text{append}( \vct{x}_{\mathrm{ls}} )$.
\EndIf
\State $\mathrm{solutions} \leftarrow [~ \vct{x} \inn \mathrm{solutions} ~ \text{ if } ~ g(\vct{x}) \geq -\epsilon_{\text{ineq}} \text{ and } |\phi(\vct{x})| \leq \epsilon_{\text{eq}} ~]$.
\State $\mathrm{solutions}.\text{sort}(f, \text{increasing})$.
\State \textbf{return} $\mathrm{solutions}$.
\EndProcedure
\end{algorithmic}
\caption{signomial solution recovery from dual SAGE relaxations.}
\label{alg:sp_solrec}
\end{algorithm}
Assuming that Equation \ref{eq:represent_c_age_i_x_star} is used to represent the dual AGE cones, the runtime of Algorithm \ref{alg:sp_solrec} is dominated by Line 10.
The runtime of Line 10, in turn, should be substantially smaller than the time required to compute $\vct{v} \in \csage{\mtx{\alpha}}^\dagger$ which is optimal for an appropriate SAGE relaxation.

Note how Algorithm \ref{alg:sp_solrec} implicitly assumes that $\vct{v}$ is elementwise positive.
For numerical reasons, this will be the case in practice.
At a theoretical level, if $\vct{v}$ contains some component $v_i = 0$, then there is no $\vct{x}$ in $\R^n$ which could attain $v_i = \exp(\vct{\alpha}_i \cdot \vct{x})$, and so solution recovery is not a well-posed problem in such situations.
Note also how in the important (and possibly nontrivial) case with $k_1 = k_2 = 0$, Algorithm \ref{alg:sp_solrec} always returns at least one feasible solution.

In the authors experience it is useful to take solutions generated from Algorithm \ref{alg:sp_solrec} and pass them to a local solver as initial conditions.
This additional step is especially worthwhile when there is a gap between a SAGE bound and a problem's true optimal value, or when solution recovery to the desired precision of $\epsilon_{\text{ineq}}, \epsilon_{\text{eq}}$ fails.
The term ``Algorithm \ref{alg:sp_solrec}L'' henceforth refers to the use of Algorithm \ref{alg:sp_solrec}, followed by local-solver solution refinement.
For the examples in this article, the authors use Powell's COBYLA solver for the solution refinement \cite{Powell1994}.\footnote{A FORTRAN implementation is accessible through SciPy's \texttt{optimize} submodule.
The arguments we pass to that FORTRAN implementation are \texttt{RHOBEG}=1, \texttt{RHOEND}$=10^{-7}$, and \texttt{MAXFUN}$=10^5$.}

\subsection{A first worked example}\label{sec:condsage_sigs:ex1}

The following problem has appeared in many articles concerning algorithms for signomial programming \cite{Shen2004,Shen2006,Qu2007,Shen2008,HSC2014}.
\begin{align}
 \inf_{\vct{x} \in \R^3} &~ f(\vct{x}) \doteq 0.5 \exp(x_1 - x_2) -\exp x_1  - 5 \exp(-x_2) \nonumber \tag{Ex1} \\
                        \text{s.t.} &~ g_1(\vct{x}) \doteq 100 -  \exp(x_2 - x_3) -\exp x_2 - 0.05 \exp(x_1 + x_3) \geq 0 \nonumber \\
                                    &~ g_{2:4}(\vct{x}) = \exp \vct{x} - (70,\,1,\, 0.5) \geq \vct{0} \nonumber \\
                                    &~ g_{5:7}(\vct{x}) = (150,\,30,\,21) - \exp \vct{x} \geq \vct{0} \nonumber
\end{align}
This problem (``Example 1'') is an excellent candidate for conditional SAGE relaxations, because each of the seven constraints defines an efficiently representable convex set.
Constraint functions $g_{2:7}$ can be can be represented with six linear inequalities, and the constraint function $g_1$ can be represented with three exponential cones and one linear inequality.
As a separate matter, Example 1 is interesting because the Lagrange dual problem performs poorly:
regardless of how many products we take of existing constraint functions $g_i$, the $-5 \exp(-x_2)$ term in the objective will cause Lagrangians $f - \sum_{I} \lambda_I \prod_{j \in I} g_j$ to be unbounded below for all values of dual variables $\lambda_I \geq 0$.

Now we describe how SAGE relaxations fare for Example 1.
We begin by setting $X = \{ \vct{x} : g_{1:7}(\vct{x}) \}$; since $X$ is bounded, Theorem \ref{thm:sigsage_bounded_error} tells us that $f_X^{\mathrm{SAGE}}$ is finite.
The dual SAGE relaxation can be solved with MOSEK on Machine $\laptop$ in 0.01 seconds, and provides us with a lower bound $f_X^{\mathrm{SAGE}} = -147.86 \leq f_X^\star$.
By running Algorithm \ref{alg:sp_solrec} on the dual solution, we recover
\[
\vct{x}^\star = (5.01063529,~ 3.40119660, -0.48450710) ~~ \text{ satisfying } ~~ f(\vct{x}^\star) = -147.66666.
\]
From this solution, we know that the bound obtained from the SAGE relaxation is within 0.13\% relative error of the true optimal value.
The ability to recover near-optimal solutions even in the presence of a gap $f_{X}^{\mathrm{SAGE}} < f_X^\star$ can be attributed to how our solution recovery algorithm differs from traditional ``moment'' techniques.
As it happens, the point $\vct{x}^\star$ recovered from Algorithm 1 is actually an optimal solution to Example 1.
In order to certify this fact, we need stronger SAGE relaxations. Table \ref{tab:sage_sig_ex_1} shows the results of these relaxations, using the minimax-free hierarchy described in Section \ref{sec:condsage_sigs:ref_hier}.

\begin{table}[h!]
    \centering
    \begin{tabular}{cccc}
    \hline
    level & SAGE bound & $\workstation$ time (s) & $\laptop$ time (s) \\ \hline
    0 & -147.85713 & 0.03 & 0.01 \\
    1 & -147.67225 & 0.05 & 0.02 \\
    2 & -147.66680 & 0.08 & 0.08 \\
    3 & -147.66666 & 0.19 & 0.26 \\ \hline
    \end{tabular}
    \caption{SAGE bounds for Example 1, with solver runtime for Machines $\workstation$ and $\laptop$.
    A level-$3$ bound certifies the level-0 solution as optimal, within relative error $10^{-8}$.}
    \label{tab:sage_sig_ex_1}
\end{table}

\subsection{Reference hierarchies for signomial programming}\label{sec:condsage_sigs:ref_hier}

This section gives a particular set of choices regarding SAGE-based hierarchies for signomial programming.
Because SAGE certificates are sparsity-preserving, one must be careful when describing relaxations which use nonconstant Lagrange multipliers, or positive-definite modulators.
In particular, when we say ``$f$ and $g_i$ are signomials over exponents $\mtx{\alpha}$,'' we mean that $\{ \vct{x} \mapsto \exp(\vct{\alpha}_j \cdot \vct{x}) \}_{j=1}^m$ is the smallest monomial basis spanning all linear combinations of $f$, $g_i$, and the function that is identically equal to one.

First we describe a SAGE-based hierarchy that does not make use of the minimax inequality.
This could be understood as a hierarchy for unconstrained optimization, but really applies whenever minimizing a signomial over a tractable convex set $X \subset \R^n$.
In the event that we cannot certify nonnegativity $f - \gamma$ with  $\gamma = f_X^\star$, we can use modulators as described in Section \ref{sec:background:strengthen} to improve the largest SAGE-certifiable bound on $f$.
Formally, for a signomial $f$ over exponents $\mtx{\alpha}$, a nonnegative integer $\ell$, and a tractable convex set $X$, the \textit{level-$\ell$ SAGE relaxation} for $f_X^\star$ is
\[
f_X^{(\ell)} \doteq \sup\{ \, \gamma \, : \, \Sig(\mtx{\alpha},\vct{1})^\ell (f - \gamma) \text{ is } X\text{-SAGE} \}.
\]
The special case with $\ell = 0$ was introduced earlier in this section as ``$f_{X}^{\mathrm{SAGE}}$.''

Now we consider the case with functional constraints;
let $f$, $g_i$, and $\phi_i$ be signomials over exponents $\mtx{\alpha}$.
SAGE relaxations for the problem of computing $(f,g,\phi)_X^\star$ are indexed by three integer parameters: $p$, $q$, and $\ell$.
Starting from $p \geq 0$ and $q \geq 1$, define $\mtx{\alpha}[p]$ as the matrix of exponent vectors for $\Sig(\mtx{\alpha},\vct{1})^p$, and define $g[q]$ as the set of all products of at-most-$q$ elements of $g$ (similarly define $\phi[q]$).
The SAGE relaxation for $(f,g,\phi)_X^\star$ at level $(p, q, \ell)$ is then
\begin{align}
    (f, g,\phi)_{X}^{(p, q, \ell)} =\sup  ~~ \gamma ~~ \text{s.t.} & ~~ s_h, z_h \text{ are } \text{signomials over exponents } \mtx{\alpha}[p]  \label{eq:sig_sage_hier_def} \\ 
                                                &~~ \mathcal{L} \doteq f - \gamma - \textstyle\sum_{h \in g[q]} s_h \cdot h - \textstyle\sum_{h \in \phi[q]} z_h \cdot h \nonumber \\
                                                &~~ \Sig(\mtx{\alpha}, \vct{1})^\ell \mathcal{L} \text{ is an } X\text{-SAGE signomial}  \nonumber \\
                                                &~~ s_h \text{ are } X\text{-SAGE signomials}. \nonumber
\end{align}
The decision variables in \eqref{eq:sig_sage_hier_def} are $\gamma \in \R$, the coefficient vectors of $\{s_h\}_{h \in g[q]}$, and the coefficient vectors of $\{ z_h \}_{h \in \phi[q]}$.
The most basic level of this hierarchy is $(p,q,\ell) = (0, 1, 0)$.
This corresponds to using scalar Lagrange multipliers ($s_h \geq 0$ and $z_h \in \R$), the original constraints ($g[0] = g, ~ \phi[0] = \phi$), and modulating the Lagrangian by the signomial that is identically equal to 1.
Note that when $p > 0$, the Lagrange multipliers $s_h$ are required to be nonnegative only over $X$, rather than over the whole of $\R^n$.

Once an appropriate SAGE relaxation has been solved, there is the matter of attempting to recover a solution from the dual problem.
Oftentimes a SAGE relaxation produces a tight bound on $(f,g,\phi)_{X}^\star$, and yet no feasible solution can be recovered from Algorithm 1 with reasonable values of $\epsilon_{\text{eq}}$.
Thus we also suggest that one eliminate equality constraints through substitution of variables, when possible.
When it is not possible to eliminate all equality constraints, we recommend allowing large violations of equality constraints in Algorithm 1 (e.g. $\epsilon_{\mathrm{eq}} = 1.0$), and passing the returned values as near-feasible points to a local solver in the manner of Algorithm 1L.\footnote{COBYLA is an excellent example of a solver which suppports infeasible starts.}
This principle also extends to allowing large values of $\epsilon_{\text{ineq}}$ prior to solution-refinement, however the authors find that this is usually not necessary.

\subsection{A second worked example}\label{sec:condsage_sigs:ex2}
This section's example can be found in the 1976 PhD thesis of James Yan \cite{Yan1976}, where it illustrates signomial programming in the service of structural engineering design.
This problem is notable because it is nonconvex even when written in exponential form.
Such signomial programs have received limited attention in the engineering design optimization community, largely due to a lack of reliable methods for solving them.
We restate the problem here (as Example 2) in geometric form.\footnote{The objective and inequality constraint functions are multiplied by $10^4$ for numerical reasons; see equation environment (6.15) on page 106 of \cite{Yan1976} for the original problem statement.} 
\begin{align}
 \inf_{\substack{\vct{A} \in \R^3_{++} \\ P \in \R_{++} }} 
            &~ 10^4 (A_1 + A_2 + A_3) \tag{Ex2}  \\
\text{s.t.} &~ 10^4 + 0.01 A_1^{-1}A_3^{} - 7.0711 A_1^{-1} \geq 0  \nonumber \\
            &~ 10^4 + 0.00854 A_1^{-1}P - 0.60385(A_1^{-1} + A_2^{-1}) \geq 0 \nonumber \\
            &~ 70.7107 A_1^{-1} - A_1^{-1}P - A_{3}^{-1}P = 0 \nonumber \\
            &~ 10^4 \geq 10^4 A_1 \geq 10^{-4} \qquad  10^4 \geq 10^4 A_2 \geq 7.0711 \nonumber \\
            &~ 10^4 \geq 10^4 A_3 \geq 10^{-4} \qquad 10^4 \geq 10^4 P_{~} \geq 10^{-4} \nonumber 
\end{align}
Let $X \subset \R^4$ be the feasible set cut out by the eight bound constraints in Example 2.
With an $X$-SAGE relaxation where all constraints appear in the Lagrangian, we obtain $(f,g,\phi)_X^{(0,1,0)} = 14.1423$ in 0.04 seconds of solver time.
This bound is very close to the optimal value claimed by Yan \cite{Yan1976}.
However, Algorithm 1 only returns candidate solutions ``$\vct{x}$'' with equality constraint violations $\phi(\vct{x}) \approx 70$.

In an effort to improve our chances of solution recovery,  we use the equality constraint to \textit{define} the value $P \leftarrow 70.7107 A_3 / (A_3 + A_1)$.
After clearing the denominator $(A_3 + A_1)$ for inequality constraints involving $P$, we obtain a signomial program (in geometric-form) in only the variables $A_1, A_2, A_3$.
We solve a level-$(0, 1, 0)$ dual conditional SAGE relaxation for this signomial program, and exponentiate the result of Algorithm \ref{alg:sp_solrec} to recover
\[
\vct{A} = (7.0711000\mathtt{e-}04,~ 7.0711000\mathtt{e-}04, ~ 1.00000000\mathtt{e-}08), ~  P = \frac{70.7107 A_3}{A_1 + A_3}.
\]
This solution is feasible up to machine precision, and attains objective matching the 14.142300 SAGE bound.
The entire process of solving the SAGE relaxation and recovering the optimal solution takes less than 0.05 seconds on Machine $\workstation$.

\subsection{Remarks on ``geometric-form'' signomial programming}

By now we have seen signomial programs in both exponential and geometric forms.
The authors have hitherto preferred the exponential form, primarily because it allows us to build upon the substantial theories of convex analysis and convex optimization.
However it is important to acknowledge that from an applications perspective, it is far more common to express signomial programs in geometric form.
Here we briefly present a geometric-form parameterization of conditional SAGE certificates for signomial nonnegativity -- both in effort to appeal to those who are accustomed to working with geometric-form signomial programs, and as a prelude to our discussion on conditional SAGE polynomials.

Geometric form signomials $f(\vct{x}) = \sum_{i=1}^m c_i \vct{x}^{\vct{\alpha}_i}$ are defined at points $\vct{x}$ in $\R^{n}_{++}$, and so it only makes sense to discuss conditional nonnegativity cones for signomials over sets $X \subset \R^n_{++}$.
Henceforth, define
\[
\cnnsgeo{\mtx{\alpha},X} = \{ \vct{c} \,:\,   \textstyle\sum_{i=1}^m c_i \vct{x}^{\vct{\alpha}_i} \geq 0 \foralll \vct{x} \inn X\}
\]
for matrices $\mtx{\alpha}$ in $\R^{m \times n}$ and sets $X$ contained in $\R^n_{++}$.
By applying the change of variables $\vct{x} \mapsto \exp \vct{y}$ and considering the subsequent change of domain $X \mapsto \log X$, one may verify that $\cnnsgeo{\mtx{\alpha},X} = \cnns{\mtx{\alpha},\log X}$.
Thus for $X \subset \R^{n}_{++}$, one arrives naturally at the definition
\[
\csagegeo{\mtx{\alpha},X} \doteq \csage{\mtx{\alpha},\log X}.
\]
From here it should be easy to deduce various corollaries for $\csagegeo{\mtx{\alpha},X}$, by appealing to results from Section \ref{sec:condsage_sigs:main}.
The most important such result is that one can efficiently optimize over $\csagegeo{\mtx{\alpha},X}$ whenever $\log X$ is a convex set for which the epigraph of the support function is efficiently representable.

\section{Conditional SAGE certificates for polynomials}\label{sec:condsage_polys}

In the previous section we saw how conditional SAGE certificates for signomial nonnegativity are inextricably linked to convex duality.
Here we show how the broader idea of conditional SAGE certificates can extend to polynomials.
In this context it is not convexity of $X$ that determines when an $X$-SAGE polynomial cone is tractable, but rather convexity of an appropriate logarithmic transform of $X$.

The organization of this section is similar to that of Section \ref{sec:condsage_sigs}.
Definitions, representations, and other basic theorems for the conditional SAGE polynomial cones are given in Section \ref{sec:condsage_polys:basics}.
Section \ref{sec:condsage_polys:solrec} covers solution recovery from dual SAGE relaxations, and Section \ref{sec:condsage_polys:worked_ex_1} provides a worked example with special focus on solution recovery.
Section \ref{sec:condsage_polys:refhier} describes reference hierarchies for optimization with conditional SAGE polynomial cones: one based on the minimax inequality, and one that is ``minmax free.''
Section \ref{sec:condsage_polys:worked_ex_2} applies various manifestations of the minimax hierarchy to an example problem.

\subsection{The conditional SAGE polynomial cones}\label{sec:condsage_polys:basics}

We call $f = \Poly(\mtx{\alpha},\vct{c})$ an \textit{$X$-AGE polynomial} if it is nonnegative over $X$, and $f(\vct{x})$ contains at most one term $c_i \vct{x}^{\vct{\alpha}_i}$ which is negative for some $\vct{x}$ in $X$.

\begin{definition}[Conditional AGE polynomial cones]\label{def:poly_cond_age}
For $\mtx{\alpha}$ in $\mathbb{N}^{m \times n}$, a subset $X$ of $\R^n$, and an index $i$ in $[m]$, the $i^{\text{th}}$ \textit{AGE polynomial cone} with respect to $\mtx{\alpha}, X$ is
\begin{align*}
\cpolyage{\mtx{\alpha},i,X} = \{ \vct{c} :& ~ \Poly(\mtx{\alpha},\vct{c})(\vct{x}) \geq 0 \foralll \vct{x} \inn X, \nonumber \\
                                          & ~ c_j \geq 0 ~ \text{ if } ~ j \neq i \text{ and } \vct{x}^{\vct{\alpha}_j} > 0 \text{ for some } \vct{x} \inn X, \\
                                          & ~ c_j \leq 0 ~ \text{ if } ~ j \neq i \text{ and } \vct{x}^{\vct{\alpha}_j} < 0 \text{ for some } \vct{x} \inn X~  \}.
\end{align*}
\end{definition}
Let us work through some consequences of the definition.
For starters, if $\vct{x}^{\vct{\alpha}_j}$ takes on positive and negative values as $\vct{x}$ varies over $X$, then $c_j = 0$ whenever $\vct{c} \in \cpolyage{\mtx{\alpha},i,X}$ and $i \neq j$.
Note that $\vct{x}^{\vct{\alpha}_j}$ can only take on both positive and negative values when $\vct{\alpha}_j$ does not belong to the even integer lattice.
If $X$ contains an open ball around the origin, then $\vct{x}^{\vct{\alpha}_j}$ takes on both positive and negative values \textit{if and only if} $\vct{\alpha}_j$ does not belong to the even integer lattice.
Thus Definition \ref{def:poly_cond_age} agrees with the definition of ordinary AGE polynomial cones, as proposed in \cite{SAGE2} and as restated in Equation \ref{eq:def_poly_age_ordinary}.
Another important case is when $X$ is a subset of the nonnegative orthant.
This point is addressed in some detail later in this section; as a preliminary remark, we note that by considering the connection between polynomials and geometric-form signomials, one can easily see that if $X \subset \R^n_{++}$ then $\cpolyage{\mtx{\alpha},i,X} = \cage{\mtx{\alpha},i,\log X}$.
With these facts in mind, we define the conditional SAGE polynomial cone in the natural way.

\begin{definition}[$X$-SAGE polynomials]\label{def:poly_Y_sage}
If the vector $\vct{c}$ belongs to
\[
\cpolysage{\mtx{\alpha}, X} \doteq \sum_{k=1}^m \cpolyage{\mtx{\alpha},k,X}
\]
then we say that $f = \Poly(\mtx{\alpha},\vct{c})$ is an $X$-SAGE polynomial.
\end{definition}

Many of our earlier theorems for signomials apply to $X$-SAGE polynomials without any special assumptions on $X$.
For example, it is easy to show that Theorem \ref{thm:sigsage_bounded_error} applies to conditional SAGE polynomials:
if $X$ is bounded, then $f = \Poly(\mtx{\alpha}, \vct{c})$ with $\vct{\alpha}_1 = \vct{0}$ has
\[
f_{X}^{\mathrm{SAGE}} \doteq \sup\{ \, \gamma \,:\, \gamma \inn \R,~ \vct{c} - \gamma (1,0,\ldots,0)  \inn \cpolysage{\mtx{\alpha},X} \} > -\infty.
\]
Corollary \ref{cor:sigsage_sparsity} likewise extends to polynomials.
Other than substituting AGE signomial cones with AGE polynomial cones, the only difference is that $\mathcal{N}$ becomes $\mathcal{N} = \{ i : c_i\vct{x}^{\vct{\alpha}_i} < 0  \text{ for some } \vct{x} \inn X \}$.

Now we turn to representation of SAGE polynomial cones.
By applying a simple continuity argument one can show that if $X = \cl X^\circ \subset \R^n_+$ -- where $X^\circ$ is the interior of $X$ -- then $\cpolysage{\mtx{\alpha},X} = \csage{\mtx{\alpha},\log X^\circ}$.
This claim is strengthened slightly and made more explicit through the following theorem.
\begin{theorem}\label{thm:reduce_polysage_to_sigsage_2}
Suppose $X \subset \R^n_+$ is representable as $X = \cl\{ \vct{x} : \vct{0} < \vct{x}, H(\vct{x}) \leq \vct{1} \}$ for a continuous map $H : \R^n \to \R^r$. Then for $Y = \{ \vct{y} : H(\exp \vct{y}) \leq \vct{1}\}$, we have $\cpolysage{\mtx{\alpha},X} =\csage{\mtx{\alpha},Y}$.
\end{theorem}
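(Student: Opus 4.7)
The plan is to prove the stronger statement $\cpolyage{\mtx{\alpha},k,X} = \cage{\mtx{\alpha},k,Y}$ for each $k \in [m]$, and then conclude by summing. The bridge between the polynomial and signomial sides is the observation that the map $\vct{y} \mapsto \exp \vct{y}$ is a homeomorphism from $\R^n$ onto $\R^n_{++}$, so it provides a bijection between $Y$ and the open set $X_0 \doteq \{\vct{x} > \vct{0} : H(\vct{x}) \leq \vct{1}\}$, whose closure is $X$ by hypothesis.

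I would first dispose of the degenerate case $X_0 = \emptyset$: then $X = \emptyset$, $Y = \emptyset$, and both cones are all of $\R^m$. In the main case $X_0 \neq \emptyset$, I would prove the forward inclusion $\cage{\mtx{\alpha},k,Y} \subset \cpolyage{\mtx{\alpha},k,X}$ as follows. Take $\vct{c} \in \cage{\mtx{\alpha},k,Y}$, so $\vct{c}_{\setminus k} \geq \vct{0}$ and $\Sig(\mtx{\alpha},\vct{c})(\vct{y}) \geq 0$ on $Y$. Since $\mtx{\alpha} \in \mathbb{N}^{m \times n}$ and $X \subset \R^n_+$, no monomial $\vct{x}^{\vct{\alpha}_j}$ is ever negative on $X$; and because $X_0 \neq \emptyset$ contains a strictly positive point, every monomial is strictly positive somewhere on $X$. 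Thus Definition~\ref{def:poly_cond_age}'s sign conditions on $\vct{c}_{\setminus k}$ collapse to $\vct{c}_{\setminus k} \geq \vct{0}$, which holds. For nonnegativity on $X$: for each $\vct{x} \in X_0$, $\vct{y} \doteq \log \vct{x}$ lies in $Y$ and $\Poly(\mtx{\alpha},\vct{c})(\vct{x}) = \Sig(\mtx{\alpha},\vct{c})(\vct{y}) \geq 0$. Continuity of $\Poly(\mtx{\alpha},\vct{c})$ together with $X = \cl X_0$ extends the inequality to all of $X$.

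The reverse inclusion is essentially dual. Given $\vct{c} \in \cpolyage{\mtx{\alpha},k,X}$, the same observation ($\vct{x}^{\vct{\alpha}_j} > 0$ for some $\vct{x} \in X_0 \subset X$) forces $c_j \geq 0$ for all $j \neq k$, matching the AGE signomial sign condition. For any $\vct{y} \in Y$, the vector $\exp \vct{y}$ lies in $X_0 \subset X$, so $\Sig(\mtx{\alpha},\vct{c})(\vct{y}) = \Poly(\mtx{\alpha},\vct{c})(\exp \vct{y}) \geq 0$, giving $\vct{c} \in \cage{\mtx{\alpha},k,Y}$.

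The principal subtlety — and the step I would take most care with — is the sign-condition bookkeeping, since Definition~\ref{def:poly_cond_age} imposes coefficient signs only when the corresponding monomial actually attains positive or negative values on $X$, whereas the signomial AGE definition uses the uniform condition $\vct{c}_{\setminus k} \geq \vct{0}$. The hypothesis $X \subset \R^n_+$ combined with nonnegative integer exponents ensures monomials never go negative on $X$, and nonemptiness of $X_0$ ensures they do go positive; these two facts are precisely what force the two sets of sign conditions to coincide. The continuity/density argument is then the only analytic ingredient needed.
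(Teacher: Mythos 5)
Your proof is correct and follows essentially the same route the paper intends: the paper omits the proof of Theorem~\ref{thm:reduce_polysage_to_sigsage_2} as ``straightforward,'' having already noted both the per-cone identity $\cpolyage{\mtx{\alpha},i,X} = \cage{\mtx{\alpha},i,\log X}$ for $X \subset \R^n_{++}$ and the ``simple continuity argument'' extending it to closures, which is exactly your exp/log bijection between $Y$ and $X_0$ plus density. One cosmetic caveat: in the degenerate case $X_0 = \emptyset$ the per-$k$ AGE equality actually fails (since $\cage{\mtx{\alpha},k,\emptyset} = \{\vct{c} : \vct{c}_{\setminus k} \geq \vct{0}\} \neq \R^m = \cpolyage{\mtx{\alpha},k,\emptyset}$), but your handling of that case directly at the SAGE level, where both cones are indeed all of $\R^m$, is sound.
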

The proof of Theorem \ref{thm:reduce_polysage_to_sigsage_2} is straightforward, and hence omitted.
A more sophisticated result concerns when $X$ is not contained in any particular orthant, but nevertheless possesses a certain sign-symmetry.
\begin{theorem}\label{thm:reduce_polysage_to_sigsage}
Suppose $X \subset \R^n$ is representable as $X = \cl\{ \vct{x} : \vct{0} < |\vct{x}|, H(|\vct{x}|) \leq \vct{1} \}$ for a continuous map $H : \R^n \to \R^r$. Then for $Y = \{ \vct{y} : H(\exp \vct{y}) \leq \vct{1}\}$, we have
\begin{align}
\cpolysage{\mtx{\alpha},X} = \{ \vct{c} ~:&~  \sigreps{\mtx{\alpha},\vct{c}} \cap \csage{\mtx{\alpha}, Y} \text{ is nonempty } \}. \label{eq:reducepolysagetosigsage}
\end{align}
\end{theorem}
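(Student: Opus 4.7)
The plan is to prove both inclusions by exploiting the sign-symmetry of $X$. From the representation $X = \cl\{\vct{x} : \vct{0} < |\vct{x}|,~H(|\vct{x}|) \leq \vct{1}\}$, flipping the sign of any coordinate preserves membership; consequently, for every index $j$ with $\vct{\alpha}_j \notin 2\mathbb{N}^{1\times n}$, the monomial $\vct{x}^{\vct{\alpha}_j}$ takes both signs over $X$, so Definition \ref{def:poly_cond_age} forces $c^{(k)}_j = 0$ in every $\vct{c}^{(k)} \in \cpolyage{\mtx{\alpha},k,X}$ whenever $j \neq k$. Throughout I write $E$ and $O$ for the sets of indices with $\vct{\alpha}_j$ in and not in $2\mathbb{N}^{1\times n}$, respectively, and use $\vct{x}^{\vct{\alpha}_j} = |\vct{x}|^{\vct{\alpha}_j}$ for $j \in E$ and $|\vct{x}^{\vct{\alpha}_j}| \leq |\vct{x}|^{\vct{\alpha}_j}$ in general. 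Nonnegativity facts proved on the interior $\{\vct{x} : |\vct{x}|>\vct{0},~H(|\vct{x}|)\leq \vct{1}\}$ will extend to $X$ by continuity.

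For ``$\subseteq$'', take a polynomial SAGE decomposition $\vct{c} = \sum_k \vct{c}^{(k)}$ with $\vct{c}^{(k)} \in \cpolyage{\mtx{\alpha},k,X}$, and define $\hat{\vct{c}}^{(k)}$ by copying $c^{(k)}_j$ on $j \in E$, placing zeros on $j \in O\setminus\{k\}$, and (when $k \in O$) setting $\hat{c}^{(k)}_k = -|c^{(k)}_k|$. To verify $\hat{\vct{c}}^{(k)} \in \cage{\mtx{\alpha},k,Y}$, evaluate $\Poly(\mtx{\alpha},\vct{c}^{(k)})$ at $\vct{x} \in X$ whose signs are chosen, via sign-symmetry, so that $c^{(k)}_k\vct{x}^{\vct{\alpha}_k} = -|c^{(k)}_k||\vct{x}|^{\vct{\alpha}_k}$: the polynomial value then equals $\Sig(\mtx{\alpha},\hat{\vct{c}}^{(k)})(\log|\vct{x}|)$, which must be nonnegative since the polynomial is nonnegative on $X$. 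Summing over $k$, the inclusion $\hat{\vct{c}} \in \sigreps{\mtx{\alpha},\vct{c}}$ follows from checking that $\hat{c}_j = c_j$ for $j \in E$ and $\hat{c}_j = -|c_j|$ for $j \in O$ (the latter using that only $k = j$ contributes to the $j$-th coordinate when $j \in O$).

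For ``$\supseteq$'', start with $\hat{\vct{c}} \in \sigreps{\mtx{\alpha},\vct{c}} \cap \csage{\mtx{\alpha},Y}$ and reduce without loss of generality to $\hat{c}_j = -|c_j|$ for every $j \in O$; this is harmless because adding a nonnegative multiple of $\vct{e}_j$, which lies in $\cage{\mtx{\alpha},j,Y}$, keeps us in both $\sigreps$ and $\csage$. Apply Corollary \ref{cor:sigsage_sparsity} on $\mathcal{N} = \{k : \hat{c}_k < 0\}$ to obtain $\hat{\vct{c}} = \sum_{k \in \mathcal{N}} \hat{\vct{c}}^{(k)}$ with $\hat{c}^{(k)}_j = 0$ for $j \in \mathcal{N}\setminus\{k\}$; combined with $\hat{c}_j = 0$ on $O\setminus\mathcal{N}$ and the AGE sign constraints $\hat{c}^{(k)}_j \geq 0$ for $k \neq j$, this forces $\hat{c}^{(k)}_j = 0$ for every $j \in O\setminus\{k\}$. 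Now construct $\vct{c}^{(k)}$ by copying the $E$-entries of $\hat{\vct{c}}^{(k)}$, zeroing its $O$-entries other than $k$, and (if $k \in O$) setting $c^{(k)}_k = c_k$. For $k \in O$ polynomial nonnegativity follows from
\[
\Poly(\mtx{\alpha},\vct{c}^{(k)})(\vct{x}) \geq -|c_k||\vct{x}|^{\vct{\alpha}_k} + \sum_{j \in E\setminus\{k\}} \hat{c}^{(k)}_j|\vct{x}|^{\vct{\alpha}_j} \geq \hat{c}^{(k)}_k|\vct{x}|^{\vct{\alpha}_k} + \sum_{j \in E\setminus\{k\}} \hat{c}^{(k)}_j|\vct{x}|^{\vct{\alpha}_j} = \Sig(\mtx{\alpha},\hat{\vct{c}}^{(k)})(\log|\vct{x}|) \geq 0,
\]
where the middle step uses $-|c_k| \geq \hat{c}_k = \hat{c}^{(k)}_k$; for $k \in E$ the polynomial coincides directly with $\Sig(\mtx{\alpha},\hat{\vct{c}}^{(k)})(\log|\vct{x}|)$. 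The sum identity $\vct{c} = \sum_k \vct{c}^{(k)}$ is immediate from the sparsity structure.

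The main obstacle is the reverse inclusion. A naive signomial-to-polynomial conversion that simply deletes the $O$-entries from each $\hat{\vct{c}}^{(k)}$ can destroy signomial nonnegativity, since those entries contribute positively to the underlying AM/GM balance. Tightening $\hat{\vct{c}}$ to its extremal signomial representative and then invoking the sparsity guarantee of Corollary \ref{cor:sigsage_sparsity} together ensure the troublesome $O$-entries are already zero in the decomposition on which the construction operates, which is what makes the pointwise bound above go through term-by-term.
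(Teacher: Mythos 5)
Your proof is correct and follows essentially the same route as the paper's: the forward inclusion flips the sign of the lone non-even coefficient in each polynomial AGE term and transfers nonnegativity through the sign-symmetry of $X$, while the reverse inclusion normalizes to the extremal signomial representative (adding nonnegative multiples of $\vct{e}_j$), invokes Corollary \ref{cor:sigsage_sparsity}, and rebuilds polynomial AGE terms via the pointwise bound $c_k \vct{x}^{\vct{\alpha}_k} \geq -|c_k|\,|\vct{x}|^{\vct{\alpha}_k}$. The only cosmetic difference is that the paper appeals to a relaxed form of the corollary with $\mathcal{N} = \{i : \tilde{c}_i \leq 0\}$, whereas you use the strict version and separately force the entries $\hat{c}^{(k)}_j$ with $j$ odd and $\hat{c}_j = 0$ to vanish through the AGE sign constraints---an equivalent piece of bookkeeping.
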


By combining Theorem \ref{thm:sigsage_represent_age} with Theorems \ref{thm:reduce_polysage_to_sigsage_2} and \ref{thm:reduce_polysage_to_sigsage}, we know that there exist a range of sets $X$ for which optimization over $X$-SAGE polynomials is tractable.
There remains the potentially nontrivial task of formulating a problem so that one of these theorems provides an efficient representation of $\cpolysage{\mtx{\alpha}, X}$;
important examples of when this is possible include constraints such as
\[
-a \leq x_j \leq a, \qquad \| \vct{x} \|_p \leq a, \qquad |\vct{x}^{\vct{\alpha}_i}| \geq a, \quad \text{ and } \quad x_j^2 = a
\]
where $a > 0$ is a fixed constant.

\begin{proof}[Proof of Theorem \ref{thm:reduce_polysage_to_sigsage}]
Suppose that $\vct{c}$ in $\cpolysage{\mtx{\alpha},X}$ admits the decomposition $\vct{c} = \sum_{i=1}^m \vct{c}^{(i)}$, where $\vct{c}^{(i)}$ belongs to the $i^{\text{th}}$ AGE polynomial cone with respect to $\mtx{\alpha}, X$.
Define $\{ \vct{\tilde{c}}^{(i)} \}_{i=1}^m$ as follows
\[
 \vct{\tilde{c}}^{(i)}_j = \begin{cases} -|c^{(i)}_j| &\text{ if } \vct{\alpha}_i \text{ is not even, and } j = i \\
                                         c^{(i)}_j &\text{ if } \text{ otherwise }
                            \end{cases}.
\]
By the invariance of $X$ under reflection about hyperplanes $\{ \vct{x} : x_j = 0 \}$, and continuity of polynomials, we have that
\begin{align*}
    0 \leq \inf\{\, \Poly(\mtx{\alpha},\vct{c}^{(i)})(\vct{x}) \,:\, \vct{x} \inn X \} 
        &= \inf\{\, \Poly(\mtx{\alpha},\vct{\tilde{c}}^{(i)})(\vct{x}) \,:\, \vct{x} \inn X \cap \R^n_+ \} \\
        &= \inf\{\, \Sig(\mtx{\alpha},\tilde{\vct{c}}^{(i)})(\vct{y}) \,:\, \vct{y} \inn Y \}.
\end{align*}
The signomials $\Sig(\mtx{\alpha},\vct{\tilde{c}}^{(i)})$ are thus nonnegative over $Y = \{ \vct{y} : H(\exp \vct{y}) \leq 1\}$, and posses at most one negative coefficient.
This implies that $\vct{\tilde{c}} \doteq \sum_{i=1}^m \vct{\tilde{c}}^{(i)}$ belongs to $\csage{\mtx{\alpha},Y}$.
One may verify that $\vct{\tilde{c}}$ also satisfies $\tilde{\vct{c}} \in \sigreps{\mtx{\alpha},\vct{c}}$, and so we conclude that the right-hand-side of Equation \eqref{eq:reducepolysagetosigsage} contains $\cpolysage{\mtx{\alpha},X}$.

Now we address the reverse inclusion.
Let $\vct{c}$ be such that $\sigreps{\vct{\alpha},\vct{c}} \cap \csage{\mtx{\alpha},Y}$ is nonempty.
One may verify that basic properties of $\csage{\mtx{\alpha},Y}$ and $\sigreps{\vct{\alpha},\vct{c}}$ ensure that if the intersection is nonempty, it contains an element $\vct{\tilde{c}}$ satisfying  $|\vct{c}| = |\vct{\tilde{c}}|$.
Henceforth fix $\vct{\tilde{c}}$ satisfying these conditions.
Next we appeal to a relaxed form of Corollary \ref{cor:sigsage_sparsity}.
Setting $\mathcal{N} = \{ i : \tilde{c}_i \leq 0\}$, there exist vectors $\vct{\tilde{c}}^{(i)}$ satisfying
\[
    \vct{\tilde{c}} = \textstyle\sum_{i \in \mathcal{N}} \vct{\tilde{c}}^{(i)}, \quad 
    \vct{\tilde{c}}^{(i)} \in \cage{\mtx{\alpha},i,Y}, \quad \text{ and } \quad 
    \tilde{c}^{(i)}_j = 0 \text{ for all } i \neq j \text{ in } \mathcal{N}.
\]
Note how the definition of $\sigreps{\mtx{\alpha},\vct{c}}$ ensures that $\mathcal{N} = \{ i : \vct{\alpha}_i \text{ is not even, or } c_i \leq 0\}$.
Thus we define $\vct{c}^{(i)}$ by
\[
c^{(i)}_j = \begin{cases}  (\sgn c_j) |\tilde{c}_j| &\text{ if } \vct{\alpha}_i \text{ is not even, and } j = i \\
                    \tilde{c}^{(i)}_j & \text{ if } \text{ otherwise }
            \end{cases}
\]
so that $\vct{c} = \sum_{i \in \mathcal{N}} \vct{c}^{(i)}$, and each $\vct{c}^{(i)}$ has the necessary sign pattern for membership in the $i^{\text{th}}$ AGE cone with respect to $\mtx{\alpha}, X$.
Finally, note that
\[
\inf\{ \Poly(\mtx{\alpha},\vct{c}^{(i)})(\vct{y}) : \vct{x} \inn X\} = \inf\{ \Sig(\mtx{\alpha},\vct{\tilde{c}}^{(i)})(\vct{y}) : \vct{y} \inn Y \} \geq 0.
\]
to complete the proof.
\end{proof}

\subsection{Solution recovery for sparse moment problems}\label{sec:condsage_polys:solrec}

This section concerns using dual SAGE relaxations to recover solutions to optimization problems of the form \eqref{eq:background:ineqConstrainedMin}, where $f$ and $g_i$ are polynomials over exponents $\mtx{\alpha}$. 
If $\mtx{G}$ a $k \times m$ matrix whose $i^{\text{th}}$ row is the coefficient vector of $g_i$, $\vct{\alpha}_1$ is the zero vector, and $\vct{c}$ is the coefficient vector of $f$, then the simplest such relaxation is
\begin{equation}
\inf\{ \vct{c}^\intercal\vct{v} \,:\,  \vct{v} \in \cpolysage{\mtx{\alpha},X}^\dagger,~ v_1 = 1,~ \mtx{G}\vct{v} \geq \vct{0} \}. \label{eq:dual_sage_poly_prob_ex}
\end{equation}
Overall, our goal is to recover vectors $\vct{x}$ in $X$ satisfying $\vct{v} = (\vct{x}^{\vct{\alpha}_1},\ldots,\vct{x}^{\vct{\alpha}_m})$, where $\vct{v}$ is an optimal solution to a relaxation such as the one above.
We assume that $X$ is of a form where one of Theorems \ref{thm:reduce_polysage_to_sigsage_2} or \ref{thm:reduce_polysage_to_sigsage} provide a tractable representation of $\csage{\mtx{\alpha},X}$;
this assumption allows us to leverage the following corollary.
\begin{corollary}\label{cor:rep_dual_sage_poly}
Fix $Y = \{ \vct{y} : H(\exp \vct{y}) \leq \vct{1} \}$ for a continuous $H : \R^n \to \R^r$.
\begin{itemize}
    \item If $X = \cl\{ \vct{x} \,:\, \vct{0} < \vct{x}, ~ H(\vct{x}) \leq \vct{1} \}$, then $\cpolysage{\mtx{\alpha},X}^\dagger = \csage{\mtx{\alpha},Y}^\dagger$.
    \item If $X = \cl\{ \vct{x} \,:\, \vct{0} < |\vct{x}|, ~H(|\vct{x}|) \leq \vct{1}\}$, then
\begin{align*}
    \cpolysage{\mtx{\alpha}, X}^\dagger = \{ \vct{v} :&~ \text{there exists }\vct{\hat{v}} \inn \csage{\mtx{\alpha},Y}^\dagger \text{ with}  \nonumber \\
                                              & ~ |\vct{v}| \leq \vct{\hat{v}}, \text{ and } v_i = \hat{v}_i \text{ when } \vct{\alpha}_i \in 2\mathbb{N}^{1 \times n}  \}. 
\end{align*}
\end{itemize}
\end{corollary}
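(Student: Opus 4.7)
The plan is to deduce Part 1 directly from Theorem \ref{thm:reduce_polysage_to_sigsage_2}, and to prove Part 2 by recasting the signomial-representative characterization of Theorem \ref{thm:reduce_polysage_to_sigsage} as a linear image of a jointly convex cone and then applying standard rules of conic duality.

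Part 1 is immediate: Theorem \ref{thm:reduce_polysage_to_sigsage_2} gives the set equality $\cpolysage{\mtx{\alpha},X} = \csage{\mtx{\alpha},Y}$, and polar-dualizing both sides yields the claim. For Part 2, partition $[m]$ into $E = \{i : \vct{\alpha}_i \in 2\mathbb{N}^{1\times n}\}$ and $O = [m] \setminus E$; then Theorem \ref{thm:reduce_polysage_to_sigsage} reads: $\vct{c} \in \cpolysage{\mtx{\alpha},X}$ iff some $\hat{\vct{c}} \in \csage{\mtx{\alpha},Y}$ satisfies $\hat{c}_i = c_i$ for $i \in E$ and $\hat{c}_i + |c_i| \leq 0$ for $i \in O$. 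This exhibits $\cpolysage{\mtx{\alpha},X}$ as the $\vct{c}$-coordinate projection of the convex cone
\[
K = \{(\vct{c},\hat{\vct{c}}) : \hat{\vct{c}} \inn \csage{\mtx{\alpha},Y},\ \hat{c}_i = c_i \foralll i \inn E,\ \hat{c}_i + |c_i| \leq 0 \foralll i \inn O\}.
\]
The inclusion ``RHS $\subseteq \cpolysage{\mtx{\alpha},X}^\dagger$'' then falls out of a direct pairing: for any $\vct{c} \in \cpolysage{\mtx{\alpha},X}$ with companion $\hat{\vct{c}}$, and any $\vct{v}$ in the right-hand side with witness $\hat{\vct{v}}$, the per-index estimates $v_i c_i = \hat{v}_i \hat{c}_i$ on $E$ and $v_i c_i \geq -|v_i||c_i| \geq \hat{v}_i \hat{c}_i$ on $O$ (invoking $|c_i| \leq -\hat{c}_i$, $|v_i| \leq \hat{v}_i$, and $\hat{v}_i \geq 0$, the last of which follows since Equation \ref{eq:represent_c_age_i_x_star} places $\csage{\mtx{\alpha},Y}^\dagger$ inside $\R^m_+$) sum to $\langle \vct{v},\vct{c}\rangle \geq \langle \hat{\vct{v}},\hat{\vct{c}}\rangle \geq 0$.

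The reverse inclusion is the substance of Part 2, and I would extract it from conic duality. Decompose $K = K_1 \cap K_2$, with $K_2 = \R^m \times \csage{\mtx{\alpha},Y}$ and $K_1$ the product of per-index cones $S_i = \{(c,\hat{c}) : c = \hat{c}\}$ for $i \in E$ and $S_i = \{(c,\hat{c}) : |c| \leq -\hat{c}\}$ for $i \in O$. Short calculations give $S_i^\dagger = \{(u,-u)\}$ on $E$, $S_i^\dagger = \{(u,w) : w \leq -|u|\}$ on $O$, and $K_2^\dagger = \{\vct{0}\} \times \csage{\mtx{\alpha},Y}^\dagger$. Applying $\cpolysage{\mtx{\alpha},X}^\dagger = \{\vct{v} : (\vct{v},\vct{0}) \in K^\dagger\}$ together with the sum-of-duals rule $(K_1 \cap K_2)^\dagger = \cl(K_1^\dagger + K_2^\dagger)$ recovers exactly the claimed representation: $v_i = \hat{v}_i$ on $E$ and $\hat{v}_i \geq |v_i|$ on $O$ for some $\hat{\vct{v}} \in \csage{\mtx{\alpha},Y}^\dagger$. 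The main technical obstacle I anticipate is discharging the outer closure in the sum-of-duals rule; I plan to do so via a Fenchel-duality argument in the spirit of the proof of Theorem \ref{thm:sigsage_represent_age}, noting that the underlying primal problem of minimizing $\langle \vct{v},\vct{c}\rangle$ over $K$ is trivially feasible (take $\vct{c} = \hat{\vct{c}} = \vct{0}$), so that an appropriate relative-interior qualification guarantees dual attainment and yields $\hat{\vct{v}}$ in the claimed form.
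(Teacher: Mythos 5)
Your overall route is exactly the one the paper intends: the paper gives no explicit proof of this corollary, presenting it as an immediate consequence of Theorems \ref{thm:reduce_polysage_to_sigsage_2} and \ref{thm:reduce_polysage_to_sigsage} together with the ``standard rules'' of conic duality it invokes elsewhere (e.g., in deriving Equation \ref{eq:represent_c_age_i_x_star}), and that is what you carry out. Part 1 is indeed immediate from Theorem \ref{thm:reduce_polysage_to_sigsage_2}. For Part 2 your bookkeeping checks out: the per-index duals $S_i^\dagger$ are computed correctly, the projection rule $\cpolysage{\mtx{\alpha},X}^\dagger = \{\vct{v} : (\vct{v},\vct{0}) \in K^\dagger\}$ is right, and the direct pairing for the easy inclusion is sound, including the use of $\csage{\mtx{\alpha},Y}^\dagger \subset \R^m_+$ (which also reconciles your output ``$v_i = \hat{v}_i$ on even indices, $\hat{v}_i \geq |v_i|$ on the rest'' with the corollary's phrasing, since $v_i = \hat{v}_i \geq 0$ gives $|v_i| \leq \hat{v}_i$ on even indices for free).

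The one genuine gap is the step you yourself flag: your plan for discharging the closure in $(K_1 \cap K_2)^\dagger = \cl(K_1^\dagger + K_2^\dagger)$ does not work as stated. Feasibility of the primal at $(\vct{c},\vct{\hat{c}}) = (\vct{0},\vct{0})$ certifies nothing: the apex of a cone lies in its relative interior only when the cone is a subspace, and the Fenchel/Slater-type conditions you would need (as in the proof of Theorem \ref{thm:sigsage_represent_age}) require a point in $\mathrm{ri}(K_1) \cap \mathrm{ri}(K_2)$. A genuine qualification point would be, for instance, $\hat{c}_i = -\epsilon$ on indices with $\vct{\alpha}_i \notin 2\mathbb{N}^{1\times n}$ and $\hat{c}_i = M \gg \epsilon$ on the even indices (with $c_i = \hat{c}_i$ on even indices, $c_i = 0$ elsewhere); this lies in the interior of $\csage{\mtx{\alpha},Y}$ when, e.g., $Y$ is bounded and some exponent is even. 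But it can fail outright in degenerate configurations: if no exponent is even, or if some odd $\vct{\alpha}_i$ is positioned so that every $Y$-nonnegative signomial must have $\hat{c}_i \geq 0$, then $K_1 \cap K_2$ forces $\hat{c}_i = c_i = 0$ and $\mathrm{ri}(K_1) \cap \mathrm{ri}(K_2) = \emptyset$. In such cases the stated identity still holds, but your argument needs a supplementary step — either a direct proof that the right-hand-side set is closed (so the outer $\cl$ can be dropped after intersecting with the slice $\{(\vct{v},\vct{0})\}$), or a restriction to the face on which the forced coordinates vanish before dualizing. In fairness, the paper states the corollary with no closure operator and no proof, so it glosses the identical point; but since this is the only nontrivial content of the derivation, a complete write-up must replace the zero-point remark with an actual constraint qualification and a treatment of the degenerate cases.
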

\noindent We make a running assumption that ``$Y$'' is convex.

Solution recovery for polynomial optimization is more difficult than for signomial optimization, because monomials possess both signs and magnitudes.
We propose a two-phase approach for this problem, where different techniques are used to recover variable magnitudes and variable signs.
The main ideas for each phase are described in Sections \ref{sec:condsage_polys:solrec:mags} and \ref{sec:condsage_polys:solrec:signs}, while the formal algorithms are given in the appendix.
The recovered signs and magnitudes are then combined in an elementary way, as given by the following algorithm.

\begin{algorithm}[H]
Input: Polynomials $f$, $\{g_i\}_{i=1}^{k_1}$, and $\{ \phi_{i}\}_{i=1}^{k_2}$ over exponents $\mtx{\alpha} \in \N^{m \times n}$. Vectors $\vct{v} \in \cpolysage{\mtx{\alpha},X}^\dagger$ and $\vct{\hat{v}} \in \csage{\mtx{\alpha},Y}^\dagger$. Tolerances $\epsilon_{\text{ineq}}, \epsilon_{\text{eq}}, \epsilon_{0} > 0$.
\begin{algorithmic}[1]
\Procedure{PolySolutionRecovery}{$f, g, \phi, \mtx{\alpha}, \vct{v}, \vct{\hat{v}}, \epsilon_{\text{ineq}}, \epsilon_{\text{eq}}, \epsilon_0$}
\State $M \gets \mathrm{VariableMagnitudes}(\mtx{\alpha},~\vct{v},~\vct{\hat{v}},~\epsilon_0)$. \quad \# Algorithm \ref{alg:pop_magrec}
\State $S \gets \{ \vct{1} \}$
\If{$X$ is not a subset of $\R^n_+$}
    \State $S$.union( $\mathrm{VariableSigns}(\mtx{\alpha},~\vct{v})$ ) \quad \# Algorithm \ref{alg:pop_signrec}
\EndIf
\State $\mathrm{solutions} \gets []$.
\For{$\vct{x}_{\mathrm{mag}} $ in $M$ and $\vct{s}$ in $S$}
    \State $\vct{x} \leftarrow \vct{x}_{\mathrm{mag}} \odot \vct{s}$  \quad \# denotes elementwise multiplication
    \If{$g(\vct{x}) \geq -\epsilon_{\text{ineq}}$ and $|\phi(\vct{x})| \leq \epsilon_{\text{eq}}$}
        \State $\mathrm{solutions}$.append($\vct{x}$)
    \EndIf
\EndFor
\State $\mathrm{solutions}.\text{sort}(f,~ \text{increasing})$.
\State \textbf{return} $\mathrm{solutions}$.
\EndProcedure
\end{algorithmic}
\caption{solution recovery for dual SAGE polynomial relaxations.}
\label{alg:pop_solrec}
\end{algorithm}

If $\vct{v}$ is optimal for Problem \eqref{eq:dual_sage_poly_prob_ex} and $\vct{v} = (\vct{x}^{\vct{\alpha}_1},\dots,\vct{x}^{\vct{\alpha}_m})$ for an elementwise nonzero $\vct{x}$ in $X$, then Algorithm \ref{alg:pop_solrec} will return an optimal solution to Problem \ref{eq:background:ineqConstrainedMin}.
As with Algorithm \ref{alg:sp_solrec} in the signomial case, the authors find it useful to apply a simple local solver to the output of Algorithm \ref{alg:pop_solrec} as a sort of solution refinement.
We say ``Algorithm \ref{alg:pop_solrec}L'' in reference to such a method, with COBYLA as the local solver.

\subsubsection{Recovering variable magnitudes}\label{sec:condsage_polys:solrec:mags}

Given $\vct{v}$ in $\cpolysage{\mtx{\alpha},X}^\dagger$, we want to find $\vct{x} \in X$ satisfying $(\vct{x}^{\vct{\alpha}_1},\ldots,\vct{x}^{\vct{\alpha}_m}) = |\vct{v}|$.

Regardless of whether $X$ is sign-symmetric or $X \subset \R^n_+$, the variable $\vct{v} \in \cpolysage{\mtx{\alpha},X}$ is associated with an auxiliary variable $\vct{\hat{v}}$ in $\csage{\mtx{\alpha},Y}$, and the variable $\vct{\hat{v}}$ is associated with additional auxiliary variables $\vct{z}_i$ as part of the dual $Y$-AGE signomial cones.
As we discussed in Section \ref{sec:condsage_sigs:dual}, the vectors $\vct{y}_i = \vct{z}_i / \hat{v}_i$ belong to $Y$, and so the vectors $\vct{x}_i = \exp \vct{y}_i$ must belong to $X$.
These vectors $\vct{x}_i$ are not only feasible with respect to $X$, but also satisfy 
\begin{equation}
(\vct{x}_i^{\vct{\alpha}_1},\ldots,\vct{x}_i^{\vct{\alpha}_m}) = \vct{\hat{v}} \label{eq:condsage_polys:solrec:mags:v_hat_eq}
\end{equation}
under the binding-constraint and normalization conditions discussed in Section \ref{sec:condsage_sigs:dual}.
Of course, if $\vct{\hat{v}} = |\vct{v}|$, then Equation \eqref{eq:condsage_polys:solrec:mags:v_hat_eq} is precisely what we desire from our variable magnitudes.
Since $\vct{\hat{v}} = |\vct{v}|$ always holds at least for $X \subset \R^n_+$, the vectors $\vct{x}_i = \exp(\vct{z}_i / \hat{v}_i)$ are reasonable candidates for variable magnitudes.

When $X$ is sign-symmetric, it is possible that $\vct{\hat{v}}$ does not equal $|\vct{v}|$.
This is particularly likely when $\vct{v}$ is subject to additional linear constraints, such as $\mtx{G}\vct{v} \geq \vct{0}$.
Therefore when $X$ is sign-symmetric it is worth considering variable magnitudes which supplement the ones described above.
We propose that one picks a threshold $\epsilon_0 > 0$, computes
\begin{align}
 \vct{y} \in \text{argmin}\{ \textstyle\sum_{i : v_i \neq 0} (\vct{\alpha}_i \cdot \vct{y} - \log |v_i|)^2 \,:\,
    & \vct{y} \inn Y, \label{eq:condsage_polys:solrec:mags:opt_prob} \\
    & \vct{\alpha}_i \cdot \vct{y} \leq \log(\epsilon_0) \foralll v_i = 0 \} \nonumber
\end{align}
and exponentiates $\vct{x} = \exp \vct{y}$.
The role of $\epsilon_0$ is to ensure that $\vct{x} = \exp \vct{y}$ satisfies $|\vct{x}|^{\vct{\alpha}_i} \leq \epsilon_0$ whenever $v_i = 0$.
Extremely small values of $\epsilon_0$ (such as $10^{-100}$) would be reasonable here.

A formal statement of our method for magnitude recovery (Algorithm \ref{alg:pop_magrec}) can be found in the appendix.

\subsubsection{Recovering variable signs}\label{sec:condsage_polys:solrec:signs}

For $\vct{v}$ in $\R^m$, let $\mtx{\alpha}^{-1}(\vct{v})$ denote the set of $\vct{x} \in \R^n$ satisfying $\vct{v} = (\vct{x}^{\vct{\alpha}_1},\ldots,\vct{x}^{\vct{\alpha}_m})$.
Henceforth, fix $\vct{v}$ and assume $\mtx{\alpha}^{-1}(\vct{v})$ is nonempty.
Here we describe how to find vectors $\vct{s}$ in $\{+1,0,-1\}^n$ so that at least one $\vct{x} \in \mtx{\alpha}^{-1}(\vct{v})$ satisfies $x_i > 0$ when $s_i = +1$, $x_i = 0$ when $s_i = 0$, and $x_i < 0$ when $s_i = -1$.
Once we describe this process, we relax the problem slightly so that $s_i = +1$ allows $x_i = 0$.

First we address when $s_i$ should equal zero.
Let $U = \{ i \in [m] \,:\, v_i \neq 0 \}$.
Consider how if some $\vct{x} \in \mtx{\alpha}^{-1}(\vct{v})$ has $x_j = 0$, then we must have $\alpha_{ij} = 0$ for all $i$ in $U$ (else the equality $\vct{x}^{\vct{\alpha}_i} = v_i \neq 0$ would fail).
Thus when $\alpha_{ij} = 0$ for all $i$ in $U$, we set $s_j = 0$ without loss of generality.
Now let $W = \{ j \in [n] \,:\, \alpha_{ij} > 0 \text{ for some } i \inn U \} $; these are indices for which $s_j$ is not yet decided.
Consider the vector $(\vct{v} < 0) \in \{0, 1\}^n$ with values $(\vct{v} < 0)_i = 1$ if $v_i < 0$, and zero if otherwise.
Let $\mtx{\alpha}[U,:]$ be the submatrix of $\mtx{\alpha}$ formed by rows $\{\vct{\alpha}_i\}_{i \in U}$, and similarly index $(\vct{v} < 0)$.
Finally, solve
\begin{equation}
\mtx{\alpha}[U,:]\vct{z} \equiv (\vct{v} < 0)[U] \mod 2 \quad \text{ and } \quad z_j = 0 \foralll j \inn [n] \setminus W\label{eq:sign_pattern_lin_sys}
\end{equation}
for $\vct{z}$ in $\{0, 1\}^n$.
The remaining $(s_j)_{j \in W}$ are $s_j = -1$ if $z_j = 1$ and $s_j = 1$ otherwise.

An individual solution to \eqref{eq:sign_pattern_lin_sys} can be computed efficiently by Gaussian elimination over the finite field $\mathbb{F}_2$.
Standard techniques for finite-field linear algebra also allow us to compute a basis for a null space of a matrix in mod 2 arithmetic (c.f. \cite{Bard2009}), and so in principle one can readily recover all possible solutions to \eqref{eq:sign_pattern_lin_sys}.
In practice we must be careful, since the number of solutions to the linear system can easily be exponentially large in $n$ (for example, when $\vct{\alpha} \equiv \vct{0}_{n \times m} \mod 2$).
Our formal algorithm for sign recovery accounts for this fact, and employs an additional hueristic to handle the case when \eqref{eq:sign_pattern_lin_sys} is inconsistent.
See the appendix for details.

\subsection{A first worked example}\label{sec:condsage_polys:worked_ex_1}

This section's example is to minimize a function appearing in the formulation of the cyclic $n$-roots problem.
The general cyclic $n$-roots problem is a challenging benchmark problem in computer algebra \cite{PHCpack}.
Our problem is to minimize
\begin{equation}
f(\vct{x}) = -64\sum_{i=1}^7\prod_{j \in [7] \setminus \{ i \}} x_j  \tag{Ex3}
\end{equation}
over the box $X = [-1/2, 1/2]^7$.  To the authors' knowledge, this problem was first used as an optimization benchmark in the work by Ray and Nataraj, on computing the extrema of polynomials over boxes \cite{RN2008}.
One may verify that $f_X^\star = -7$, and that this objective value is attained at $\vct{x}^{(1)} = \vct{1}/2$ and $\vct{x}^{(2)} = -\vct{1}/2$.
Despite this problem's simplicity, it requires nontrivial computational effort with SOS methods.
The lowest relaxation order that allows Gloptipoly3 \cite{gloptipoly} to compute $f_X^\star = -7$ results in a semidefinite program that takes MOSEK 90 seconds to solve with Machine $\workstation$.

SAGE relaxations automatically exploit the structure in this problem.
Since the seven functions $f_i(\vct{x}) = 1-64\prod_{j \neq i} x_i$ are $X$-AGE and sum to $f + 7$, we have that $-7 \leq f_{X}^{\mathrm{SAGE}} \leq f_X^\star$.
To address the dual SAGE relaxation and solution recovery, we introduce the $8 \times 7$ matrix $\mtx{\alpha}$, with final row $\mtx{\alpha}_8 = \vct{0}$, $\alpha_{ii} = 0$ for $i \leq 7$, and $\alpha_{ij} = 1$ for the remaining entries.
Next we write $X =\{ \vct{x} : \vct{x}^2 \leq \vct{1}/4 \}$, and for $Y = \{ \vct{y} : \exp(2\vct{y}) \leq \vct{1}/4 \}$ numerically solve
\[
f_X^{\mathrm{SAGE}} = \inf\{  -64 \cdot \vct{1}^\intercal \vct{v}_{1:7} \,:\, - \vct{\hat{v}} \leq \vct{v} \leq \vct{\hat{v}}, ~ \vct{\hat{v}} \inn \csage{\mtx{\alpha},Y}^\dagger,~ v_{8} = \hat{v}_{8} = 1 \} = -7.
\]
MOSEK solves this problem in 0.01 seconds with Machine $\workstation$.

We recover candidate magnitudes by using the eight $Y$-AGE cones associated with the auxiliary variable $\vct{\hat{v}} \in \csage{\mtx{\alpha},Y}^\dagger$.
To machine precision, each of these AGE cones yields the same candidate magnitude $|\vct{x}| = \vct{1}/2$.
The optimal moment vector $\vct{v} = \vct{1} / 64$ is elementwise positive, and so sign-pattern recovery is a matter of finding all solutions to the system
$\mtx{\alpha} \vct{z} \equiv \vct{0} \mod 2$. 
There are exactly two solutions to this system: $\vct{z}^{(1)} = \vct{0}$, and $\vct{z}^{(2)} = \vct{1}$.
The first of these gives rise to signs $\vct{s}^{(1)} = \vct{1}$, and the second of these results in $\vct{s}^{(2)} = -\vct{1}$.
By combining these candidate signs with candidate magnitudes, we obtain candidate solutions $\{ \vct{1}/2, - \vct{1}/2 \}$; since these solutions are feasible and obtain objective values matching the SAGE bound, we conclude that both candidate solutions are minimizers of $f$ over $X$.

\subsection{Reference hierarchies for POPs}\label{sec:condsage_polys:refhier}

If $X \subset \R^n_+$, then one should use the same hierarchies described in Section \ref{sec:condsage_sigs:ref_hier}, where ``$\Sig$'' is replaced by ``$\Poly$'' and constraints that a function is ``an $X$-SAGE signomial'' are replaced by constraints that the function is ``an $X$-SAGE polynomial.''
This section focuses on the more complicated case when $X$ is sign-symmetric.

Our reference hierarchy for functionally constrained polynomial optimization is similar to that used for signomial programming.
Let $f$, $\{ g_i \}_{i=1}^{k_1}$, and $\{ \phi_i\}_{i=1}^{k_2}$ be polynomials over common exponents $\mtx{\alpha} \in \N^{m \times n}$, and fix sign-symmetric $X \subset \R^n$.
Define $\mtx{\hat{\alpha}}$ as the matrix formed by stacking $\mtx{\alpha}$ on top of $2\mtx{\alpha}$, and then removing any duplicate rows.
The SAGE relaxation for $(f,g,\phi)_X^\star$ at level $(p, q, \ell)$ is then
\begin{align}
    (f, g,\phi)_{X}^{(p, q, \ell)} =\sup  ~~ \gamma ~~ \text{s.t.} & ~~ s_h, z_h \text{ are } \text{polynomials over exponents } \mtx{\hat{\alpha}}[p] \label{eq:poly_sage_hier_def}  \\ 
                                                &~~ \mathcal{L} \doteq f - \gamma - \textstyle\sum_{h \in g[q]} s_h \cdot h - \textstyle\sum_{h \in \phi[q]} z_h \cdot h \nonumber \\
                                                &~~ \Poly(2\mtx{\alpha}, \vct{1})^\ell \mathcal{L} \text{ is an } X\text{-SAGE polynomial}  \nonumber \\
                                                &~~ s_h \text{ are } X\text{-SAGE polynomials}. \nonumber
\end{align}
As before, the decision variables are $\gamma \in \R$, and the coefficient vectors of $\{s_h\}_{h \in g[q]}$, $\{ z_h \}_{h \in \phi[q]}$.
The main difference between \eqref{eq:poly_sage_hier_def} and it's signomial equivalent \eqref{eq:sig_sage_hier_def}, is that the Lagrange multipliers are slightly more complex in \eqref{eq:poly_sage_hier_def}. This change was made to improve performance for problems where only a few rows of $\mtx{\alpha}$ belong to the nonnegative even integer lattice.

Our minimax-free reference hierarchy for polynomial optimization is meaningfully different from the signomial case.
We begin by assuming a representation $X = \cl\{ \vct{x} \,:\, \vct{0} < |\vct{x}|, ~ H(|\vct{x}|) \leq \vct{1} \}$, and subsequently defining $Y = \{ \vct{y} \,:\, H(\exp \vct{y}) \leq \vct{1} \}$.
Let $\mathcal{A}$ and $\mathcal{C}$ be operators on polynomials so that $f = \Poly(\mathcal{A}(f),\mathcal{C}(f))$ always holds, and
let $\vct{s}$ be the vector in $\R^m$ with $s_i = 1$ when $\vct{\alpha}_i$ is even, and $s_i = 0$ otherwise.
The SAGE relaxation for $f_X^\star$ at level $(p, q)$ is
\begin{align}
    f_X^{(p, q)} = \sup & ~~ \gamma \label{eq:poly_sage_uncon_hier} \\
    \text{s.t.} & ~~ \psi \doteq \Poly(\mtx{\alpha},\vct{s})^{p}(f - \gamma) \nonumber \\
    & ~~ \vct{c} \in \sigreps{\mathcal{A}(\psi),\, \mathcal{C}(\psi)} \nonumber \\
    & ~~ [\Sig(\mathcal{A}(\psi),
    \,\vct{1})]^{q} \Sig(\mathcal{A}(\psi),~\vct{c}) \text{ is } Y\text{-SAGE} \nonumber
\end{align}
over optimization variables $\vct{c}$ and $\gamma$.

Formulation \eqref{eq:poly_sage_uncon_hier} uses two parameters out of desire to mitigate \textit{both} sources of error in the SAGE polynomial cone: the error from replacing a polynomial by its signomial representative, and the error from replacing the signomial nonnegativity cone by the SAGE cone.
As we show in Section \ref{sec:comp:poly_ex}, the signomial representative complexity parameter ``$q$'' can make the difference in our ability to solve problems when $X = \R^n$.

\subsection{A second worked example}\label{sec:condsage_polys:worked_ex_2}

The following problem appears in work on ``Bounded Degree Sums-of-Squares'' (BSOS) and ``Sparse Bounded Degree Sums-of-Squares'' (Sparse-BSOS) methods for polynomial optimization \cite{LTY2017,WLT2018}.
The latter paper reports that BSOS and Sparse-BSOS compute $(f,g)_{\R^6}^\star = -0.41288$ in 44.5 and 82.1 seconds respectively, when using SDPT3-4.0 on a machine with a quad-core 2.6GHz Core i7 processor and 16GB RAM.

\begin{align}
 \inf_{\vct{x} \in \R^6} &~ f(\vct{x}) \doteq x_1^6 - x_2^6 + x_3^6 - x_4^6 + x_5^6 - x_6^6 + x_1 - x_2  \tag{Ex4} \\
                        \text{s.t.} &~ g_1(\vct{x}) \doteq 2 x_{1}^{6}+3 x_{2}^{2}+2 x_{1} x_{2}+2 x_{3}^{6}+3 x_{4}^{2}+2 x_{3} x_{4}+2 x_{5}^{6}+3 x_{6}^{2}+2 x_{5} x_{6}  \geq 0 \nonumber \\
                                    &~ g_2(\vct{x}) \doteq 2 x_{1}^{2}+5 x_{2}^{2}+3 x_{1} x_{2}+2 x_{3}^{2}+5 x_{4}^{2}+3 x_{3} x_{4}+2 x_{5}^{2}+5 x_{6}^{2}+3 x_{5} x_{6}  \geq 0 \nonumber \\
                                    &~ g_3(\vct{x}) \doteq 3 x_{1}^{2}+2 x_{2}^{2}-4 x_{1} x_{2}+3 x_{3}^{2}+2 x_{4}^{2}-4 x_{3} x_{4}+3 x_{5}^{2}+2 x_{6}^{2}-4 x_{5} x_{6} \geq 0 \nonumber \\
                                    &~ g_4(\vct{x}) \doteq x_{1}^{2}+6 x_{2}^{2}-4 x_{1} x_{2}+x_{3}^{2}+6 x_{4}^{2}-4 x_{3} x_{4}+x_{5}^{2}+6 x_{6}^{2}-4 x_{5} x_{6} \geq 0 \nonumber \\
                                    &~ g_5(\vct{x}) \doteq x_{1}^{2}+4 x_{2}^{6}-3 x_{1} x_{2}+x_{3}^{2}+4 x_{4}^{6}-3 x_{3} x_{4}+x_{5}^{2}+4 x_{6}^{6}-3 x_{5} x_{6} \geq 0 \nonumber \\
                                    &~ g_{6:10}(\vct{x}) \doteq 1 - g_{1:5}(\vct{x}) \geq  \vct{0} \nonumber \\
                                    &~ g_{11:16}(\vct{x}) = \vct{x} \geq \vct{0} \nonumber
\end{align}
This problem (Example 4) is a good example for conditional SAGE polynomial relaxations, because it allows for several choices in partial dualization.

The simplest choice is to use no partial dualization at all-- simply solve relaxations of the form \eqref{eq:poly_sage_hier_def} with $X = \R^n$.
Indeed, it is possible to solve Example 3 with only these ordinary SAGE certificates, however the necessary level of the hierarchy $(f,g)^{(1,1,0)}_{\R^6} = -0.41288$ requires 101 seconds of solver time on Machine $\workstation$.

A strictly preferable alternative is to apply partial dualization with $X = \R^6_+$.
With this choice of $X$ it is natural to drop the first two and last six constraints from $g$ (all of which will be trivially satisfied), and work with $\hat{g} = g_{3:10}$.
This allows us to compute $(f,\hat{g})^{(1,1,0)}_{X} = -0.41288$ in 3.04 seconds of solver time on Machine $\workstation$, and 4.4 seconds of solver time on Machine $\laptop$.
It is significant that the SAGE relaxation can be solved in this amount of time on Machine $\laptop$, since it is an order of magnitude faster than BSOS solve time reported in \cite{WLT2018}.

The most aggressive choice for partial dualization is $X = \{ \vct{x}  \,:\, \vct{x} \geq \vct{0}, g_{6:7} \geq \vct{0} \}$.
With this choice of $X$ one has the option of using $\hat{g} = (g_{3:5},g_{8:10})$, or $\hat{g} = g_{3:10}$; in the first case Machine $\workstation$ computes $(f,\hat{g})_X^{(1,1,0)} = -0.47121$ in 3.3 seconds, and in the second case Machine $\workstation$ computes $(f,\hat{g})_X^{(1,1,0)} = -0.41288$ in 5.67 seconds.
It is worth emphasizing that even though $g_{6:7}$ were incorporated into $X$, the SAGE bound with Lagrange multiplier complexity $p=1$ improved by including $g_{6:7}$ in the Lagrangian.

\section{Computational experiments}\label{sec:experiments}
This section presents the results of some computational experiments with SAGE relaxations.
Experiments with signomial programs consist of twenty-nine problems drawn from the literature, of which seventeen are solved to optimality (see Section \ref{sec:comp:sig_ex}).
Examples for polynomial optimization include twenty-two problems from the literature (Section \ref{sec:comp:poly_ex}), as well as randomly generated problems (Section \ref{sec:comp:rand_poly}).

All experiments described here were run with the provided \texttt{sageopt} python package.
\texttt{Sageopt} is an extension and refinement of the ``\texttt{sigpy}'' package introduced by the authors in the appendix of \cite{SAGE2}.
In the spirit of Gloptipoly3 \cite{gloptipoly}, \texttt{sageopt} includes its own basic rewriting system to cast a SAGE relaxation into conic forms acceptable by appropriate solvers.
The rewriting system also provides mechanisms for computing constraint violations, analyzing low-level problem data, and constructing a set $X$ (in an appropriate representation) from lists of constraint functions $g$ and $\phi$.
Once problem data $f$, $g$, and $\phi$ are defined, a SAGE relaxation can be constructed and solved in just two lines of code.
Solution recovery similarly requires no more than two lines of code.
\texttt{Sageopt} currently supports the conic solvers ECOS \cite{ecos,expConeThesis} and MOSEK \cite{mosek}.

Unless otherwise stated, experiments were conducted on Machine $\workstation$. All experiments were conducted with the MOSEK solver, using the default solver tolerances.
We note that although Sections \ref{sec:condsage_sigs:ref_hier} and \ref{sec:condsage_polys:refhier} only stated the SAGE relaxations in primal form, these experiments were conducted by symbolically constructing primal and dual problems, and solving them separately from one another.
In order to communicate the quality of these numeric solutions, we generally report ``SAGE bounds'' to the farthest decimal point where the primal and dual objectives agree.

\subsection{Signomial programs from the literature}\label{sec:comp:sig_ex}
The examples in this section were drawn from the PhD thesis of James Yan \cite{Yan1976}, a popular benchmarking paper by Rijckaert and Martens \cite{RM1978}, and the more contemporary works \cite{HSC2014,Xu2014}.
The organization of this section is chronological with respect to these three sources.
Many of the problems considered here can be found elsewhere in the literature, including work by Shen et al. \cite{Shen2004,Shen2006,Shen2008}, Wang and Liang \cite{WL2005} and Qu et al. \cite{Qu2007}.

SAGE recovers best-known solutions for all but six of the twenty nine problems considered here.
For every one of these six problematic examples, numerical issues resulted in solver failures for level-$(p, q, \ell)$ relaxations whenever $p > 0$; the results for these six problems should not be taken as definitive.
For the twenty-three problems where SAGE recovered best-known solutions, there are two important trends we can observe.
First, our solution recovery algorithms are more likely to succeed with a conditional SAGE relaxation than with an ordinary SAGE relaxation, even when the ordinary SAGE relaxation is tight.
Second-- the local solver refinement in Algorithm 1L can help tremendously not only in the presence of suboptimal strictly-feasible initial solutions (Example 8), but also in the presence of both large and small constraint violations (Examples 9 and 6 respectively).
The initial condition from a SAGE relaxation in Algorithm 1L is important; the underlying COBYLA solver can and will return suboptimal solutions if initialized poorly.

\subsubsection{Problems from the PhD thesis of James Yan}

We attempted to solve nine example problems appearing James Yan's 1976 PhD thesis \textit{Signomial programs with equality constraints : numerical solution and applications} \cite{Yan1976}.
This section reproduces two of the six problems which we solved to global optimality via SAGE certificates.
Yan's ``Problem B'' (page 88) and ``Problem C'' (page 89) serve as our Examples 5 and 6 respectively.

\begin{align}
 \inf_{ \vct{x} \in \R^4} &~ f(\vct{x}) \doteq 2 - \exp(x_1 + x_2 + x_3) \tag{Ex5}  \\
                        \text{s.t.} &~ g_1(\vct{x}) \doteq 4 - \exp x_3 - 15 \exp(x_2 + x_3) - 15 \exp(x_3 + x_4) \geq 0  \nonumber \\
                                    &~ g_{2:5}(\vct{x}) \doteq  (1,1,1,2) - \exp \vct{x} \geq \vct{0} \nonumber \\
                                    &~ g_{6:9}(\vct{x}) \doteq \exp \vct{x} - (1,1,1,1)/10  \geq \vct{0} \nonumber \\
                                    &~ \phi_1(\vct{x}) \doteq \exp x_1 + 2 \exp x_2 + 2 \exp x_3 - \exp x_4 = 0 \nonumber
\end{align}
It is possible to quickly compute $(f,g,\phi)_{\R^4}^\star = 1.\overline{925}$ with both conditional and ordinary SAGE certificates, although conditional SAGE certificates exhibit better performance for solution recovery.
Specifically, $(f,g,\phi)^{(1,1,0)}_{\R^4} = 1.92592592$ can be computed in 0.12 seconds, but no solution can be recovered from Algorithm \ref{alg:sp_solrec} unless $\epsilon$ is set to an unacceptably large value of $0.1$.
Instead we set $X = \{ \vct{x} \,:\, g(\vct{x}) \geq \vct{0} \}$, compute $(f,g,\phi)_{X}^{(1,1,0)} = 1.92592593$ in 0.18 seconds, and by running Algorithm \ref{alg:sp_solrec} recover $\vct{x}^\star$ satisfying $g(\vct{x}^\star) >$ 1E-11, $|\phi(\vct{x}^\star)| <$ 1E-8, and $f(\vct{x}^\star) = 1.92592593$.

\begin{align}
 \inf_{\vct{y} \in \R^3_{++}} 
            &~ y_1^{0.6}y^{}_2 + y^{}_2 y_3^{-0.5} + 15.98 y_1^{} + 9.0824 y_2^2 - 60.72625 y_3^{} \tag{Ex6}  \\
\text{s.t.} &~ y_{2}^{-2}y_3^{} - y_1^{} y_2^{-2} -0.48 \geq 0  \nonumber \\
            &~ y_1^{0.5} y_3^{2} -y_1^{0.25} y_3^{} - y_2^2 - 5.75 \geq 0 \nonumber \\
            &~ (1000, 1000, 1000) \geq \vct{y} \geq (0.1, 0.1, 0.1) \nonumber \\
            &~ y_1^2 + 4 y_2^2 + 2 y_3^2 - 58 = 0 \nonumber \\
            &~ y_1^{}y_2^{-1}y_3^{2.5} + y_2^{} y_3^{} - y_2^2 - 16.55 = 0 \nonumber
\end{align}
Setting $X = \{ \vct{x} \in \R^3 \,:\, g(\vct{x}) \geq \vct{0} \}$, we can compute $(f,g_{1:2},\phi)_X^{(0,1,0)} = -320.722913$ in 0.04 seconds.
By running Algorithm 1 with $\epsilon_{\text{ineq}} =$1E-8 and $\epsilon_{\text{eq}} =$1E-6, we recover $\vct{x}$ with objective $f(\vct{x}) = -320.722913$ and that is feasible up to tolerance 8E-7.
We then pass this solution to COBYLA with parameter \texttt{RHOEND}=1E-10, and subsequently recover recover $\vct{x}^\star$ with the same objective, but constraint violation of only 5E-13.

The remaining problems which we solved to optimality were ``Problem A'' on page 60, ``Problem A'' on page 88, ``Problem D'' on page 89, and the problem in equation environment ``(6.15)'' on page 106.
The last of these was introduced in Section \ref{sec:condsage_sigs:ex2} as ``Example 2.''
The problems which we did not solve to optimality were ``Problem B'' on page 61, the problem in equation environment ``(6.29)'' on page 113, and the problem in equation environment ``(6.36)'' on page 120.
In each of these unsolved cases, we encountered solver-failures for level-$(p, q, \ell)$ relaxations whenever $p > 0$.
Therefore the bounds computed for each of these problems were essentially limited to those of Lagrange dual problems, with modest partial dualization.

\subsubsection{Problems from the benchmarking paper of Rijckaert and Martens}

We attempted to solve problems 9 through 18 of the popular signomial-geometric programming benchmark paper by Rijckaert and Martens \cite{RM1978}.
Of these ten problems, seven met with at least moderate success, in that SAGE relaxations produced meaningful lower bounds on a problem's optimal value, and also facilitated recovery of best-known solutions to these problems.
SAGE certificates allow us to certify global optimality for four of these seven problems.
Problem statistics and a qualitative summary of SAGE performance is given in Table \ref{tab:rm1978_summary}.

We reproduce Rijckaert and Martens' problems 10 and 15 as our Examples 7 and 8 respectively; both problems are written in exponential-form.
\begin{align}
     \inf_{\vct{x} \in \R^{3}} 
            &~ f(\vct{x}) \doteq 0.5 \exp(x_1 - x_2) - \exp x_1 - 5 \exp(-x_2)  \tag{Ex7}  \\
\text{s.t.} &~ g_1(\vct{x}) \doteq 100 - \exp(x_2 - x_3) - \exp x_1 - 0.05 \exp(x_1 + x_3) \geq 0 \nonumber \\
            &~ g_{2:4}(\vct{x}) \doteq (100, 100, 100) - \exp \vct{x} \geq \vct{0} \nonumber \\
            &~ g_{5:7}(\vct{x}) \doteq \exp \vct{x} - (1,1,1) \geq \vct{0} \nonumber 
\end{align}
The bound constraints appearing in Example 7 are not included in \cite{RM1978}, however $f$ is unbounded below if we omit them.
The solution proposed in \cite{RM1978} has $\exp \vct{x} = (88.310, 7.454, 1.311)$, and objective value $f(\vct{x}) = -83.06$.
The actual optimal solution has value $-83.25$, and this can be certified by running Algorithm 1 on a dual solution for $f_X^{(3)} = -83.2510$, where $X = \{ \vct{x} \,:\, g(\vct{x}) \geq \vct{0}\}$.
Solving the necessary SAGE relaxation takes 0.1 seconds on Machine $\workstation$.

\begin{align}
\inf_{\vct{x} \in \R^{10}} 
            &~ f(\vct{x}) \doteq 0.05 \exp x_1 + 0.05 \exp x_2 + 0.05 \exp x_3 + \exp x_9 \tag{Ex8}  \\
\text{s.t.} &~ g_1(\vct{x}) \doteq 1 + 0.5 \exp(x_1 + x_4 - x_7) - \exp(x_{10} - x_7) \geq 0 \nonumber \\
            &~ g_2(\vct{x}) \doteq 1 + 0.5 \exp(x_2 + x_5 - x_8) - \exp(x_7 - x_8) \geq 0 \nonumber \\
            &~ g_3(\vct{x}) \doteq 1 + 0.5 \exp(x_3 + x_6 - x_9) - \exp(x_8 - x_9) \geq 0 \nonumber \\
            &~ g_4(\vct{x}) \doteq 1 - 0.25 \exp(-x_{10}) - 0.5 \exp(x_9 - x_{10}) \geq 0  \nonumber \\
            &~ g_5(\vct{x}) \doteq 1 - 0.79681 \exp(x_4 - x_7) \geq 0\nonumber \\
            &~ g_6(\vct{x}) \doteq 1 - 0.79681 \exp(x_5 - x_8) \geq 0\nonumber \\
            &~ g_7(\vct{x}) \doteq 1 - 0.79681 \exp(x_6 - x_9) \geq 0\nonumber
\end{align}
A level (1,1,0) ordinary SAGE relaxation for Example 8 can be solved in 2.8 seconds on Machine $\workstation$; this returns the bound $(f,g)_{\R^{10}}^{(1,1,0)} = 0.2056534$.
When Algorithm \ref{alg:sp_solrec} is run on the dual solution, it returns a point $\vct{x}$ satisfying $f(\vct{x}) \approx 0.38$ and $g(\vct{x}) \geq 0.053$.
However by subsequently running Algorithm \ref{alg:sp_solrec}L, we obtain $\vct{x}^\star$ satisfying $f(\vct{x}^\star) = 0.20565341$ and $g_i(\vct{x}^\star) \geq$ 1E-8 for all $i$ in $[k]$.
We thus conclude that the level-$(1,1,0)$ SAGE relaxation was tight.

\begin{table}[h!]
    \centering
\begin{tabular}{c|cc|cc}
Num. in \cite{RM1978} & $n$ & $k$ & solution quality & optimal? \\ \hline
9 & 2 & 1 & same & unknown \\
10 & 3 & 1 & improved & yes \\
11 & 4 & 2 & same & yes \\
12 & 8 & 4 & same & unknown \\
13 & 8 & 6 & no solution & no \\
14 & 10 & 7 & same & unknown \\
15 & 10 & 7 & same & yes \\
16 & 10 & 7 & same & yes \\
17 & 11 & 8 & no solution & no \\
18 & 13 & 9 & no solution & no \\ \hline
\end{tabular}
    \caption{Columns $n$ and $k$ give number of variables and number of inequality constraints for the indicated problem.
    ``Solution quality'' is ``same'' (resp. ``improved'') if Algorithm \ref{alg:sp_solrec}L returned a feasible solution with objective equal to (resp. less than) that proposed in \cite{RM1978}. Problems 9, 12, and 14 are discussed in Table \ref{tab:rm1978_partial_success}. We encountered solver failures for level-$(1,1,0)$ relaxations of problems 13, 17, and 18.}
    \label{tab:rm1978_summary}
\end{table}

\begin{table}[h!]
\begin{center}
\begin{tabular}{c|cccccc}
\hline
 &  \multicolumn{2}{c}{ SAGE relaxation } & \multicolumn{2}{c}{ Algorithm \ref{alg:sp_solrec} } & \multicolumn{2}{c}{ Algorithm \ref{alg:sp_solrec}L } \\
Num. in \cite{RM1978} & $(p, q, \ell)$ & bound & $f(\vct{x})$ & $\min g(\vct{x})$ & $f(\vct{x})$ & $\min g(\vct{x})$ \\ \hline
9 & (3,3,1) & 11.7 & 12.500 & 0.00438 & 11.9600 & 2.00E-10 \\
12 & (0,2,1) & -6.4 & -5.7677 & 0.00034 & -6.0482 & -5.00E-09 \\
14 & (0,4,0) & 0.7 & 2.5798 & 0.01541 & 1.14396 & -8.00E-09 \\ \hline
\end{tabular}
\caption{Problems for which we did not certify optimality, but nevertheless recovered best-known solutions by using SAGE relaxations.
        Note that Algorithm \ref{alg:sp_solrec} returned strictly-feasible solutions in each of these cases.
        In the next section we present examples where Algorithm \ref{alg:sp_solrec} does not return feasible solutions, and so solution refinement (i.e. Algorithm \ref{alg:sp_solrec}L) becomes more important.}\label{tab:rm1978_partial_success}
\end{center}
\end{table}

\FloatBarrier

\subsubsection{Problems from contemporary sources}

Here we describe our attempts at solving six problems from the 2014 article by Hou, Shen, and Chen \cite{HSC2014}, as well as four problems from the 2014 article by Xu \cite{Xu2014}.
SAGE relaxations are quite successful in this regard: seven of the ten problems are solved to global optimality (verified SAGE bounds), while best-known (but possibly suboptimal) solutions are obtained for the remaining three problems.
Summary results can be found in Tables \ref{tab:contemp_summary} and \ref{tab:contemp_unknown}.
We explicitly reproduce problem \cite{HSC2014}-8 as our Example 9.

\begin{table}[h!]
    \centering
    \begin{tabular}{cc|ccc|ccc}
    \hline
    source & num. & $n$ & $k_1$ & $k_2$ & objective & infeasibility & optimal? \\
    \hline
    \cite{HSC2014} & 1 & 4 & 10 & 0 & 0.7650822 & 0 & yes \\
    - & 2 & 2 & 5 & 0 & 11.964337 & 0 & yes \\
    - & 3 & 3 & 7 & 0 & -147.66666 & 0 & yes \\
    - & 5 & 5 & 16 & 0 & 10122.493 & 4.00E-13 & unknown \\
    - & 7 & 3 & 6 & 0 & -10.363636 & 2.00E-15 & yes \\
    - & 8 & 15 & 37 & 6 & 156.21963 & 4.00E-14 & yes \\
    \hline
    \cite{Xu2014} & 4 & 2 & 1 & 1 & 1.3934649 & 2.00E-10 & yes \\
    - & 5 & 6 & 9 & 4 & -0.3888114 & 5.00E-17 & unknown \\
    - & 6 & 2 & 4 & 2 & 1.1771243 & 4.00E-12 & yes \\
    - & 7 & 6 & 20 & 3 & 10252.790 & 8.00E-14 & unknown \\
    \hline
    \end{tabular}
    \caption{Columns $n$, $k_1$, and $k_2$ specify the number of variables, inequality constraints, and equality constraints in the indicated problem.
    The last three columns specify the objective value and constraint violation of a solution obtained by running Algorithm \ref{alg:sp_solrec}L on the output of a dual SAGE relaxation, as well as a note on whether the objective matched a SAGE bound.
    Problems with ``unknown'' optimality status are described in Table \ref{tab:contemp_unknown}.
    } \label{tab:contemp_summary}
\end{table}

\begin{align}
    \inf_{\vct{y} \in \R^{15}_{++}}
            &~ \textstyle\sum_{i=1}^4 y_{i + 11}(12.62626 - 1.231059 y_i) \tag{Ex9} \\
\text{s.t.} &~ y_{12}-y_{11} \leq 0, \quad y_{11}-y_{12} \leq 50, \quad y_{10}-y_{4} \leq 0 \nonumber \\
            &~ y_{9}-y_{10} \leq 0, \quad y_{8}-y_{9} \leq 0, \quad 2 y_{7}-y_{1} \leq 1 \nonumber \\
            &~ y_{3}-y_{4} \leq 0, \quad y_{2}-y_{3} \leq 0, \quad y_{1}-y_{2} \leq 0 \nonumber \\
            &~ 50 y_{4}+y_{10} y_{15}-50 y_{10} -y_{4} y_{15} \leq 0 \nonumber \\
            &~ 50 y_{10}+y_{4} y_{5}+y_{9} y_{14}-50 y_{9} - y_{3} y_{14}-y_{8} y_{15} \leq 0 \nonumber \\
            &~ 50 y_{7}+y_{2} y_{13}+y_{7} y_{12}-50 y_{8} - y_{1} y_{12}-y_{8} y_{13} \leq 0 \nonumber \\
            &~ 50 y_{8}+y_{1} y_{12}+y_{8} y_{13}-50 y_{7} - y_{2} y_{13}-y_{7} y_{12} \leq 0 \nonumber \\
            &~ 50 y_{8}+50 y_{9}+y_{3} y_{14}+y_{8} y_{13} - y_{2} y_{13}-y_{9} y_{14} \leq 500 \nonumber \\
            &~ y_{6} y_{11}+y_{1} y_{12}+y_{7} y_{11}-y_{6} y_{12} \leq 0 \nonumber \\
            &~ 100 y_{i+5}+0.0975 y_{i}^{2}-3.475 y_{i}-9.75 y_{i} y_{i+5} \leq 0 \foralll i \inn [5] \nonumber \\
            &~ \vct{y} \geq (1.000000,1,9,9,9,1,1.000000,1,1,1,50,0.0,1.0,50,50) \nonumber \\
            &~ \vct{y} \leq (8.037732,9,9,9,9,1,4.518866,9,9,9,100,50,50,50,50) \nonumber 
\end{align}
Six of the fifteen variables in Example 9 have matching upper and lower bounds -- these are the six equality constraints alluded to in Table \ref{tab:contemp_summary}.
Our formulation differs from \cite{HSC2014}-8, in that a constraint ``$x_3 x_2 - x_3 \leq 0$'' in the original problem statement was replaced by ``$y_2 - y_3 \leq 0$'' in our problem statement.
This change is necessary because the original problem is actually infeasible.

We approach Example 9 by maximizing our use of partial dualization: the set $X \subset \R^{15}$ includes all bound constraints, all but two of the first nine inequality constraints, as well as the constraint fourth from the end of the problem statement.
The equality constraints implied for variables $y_3, y_4, y_5, y_6, y_{14}, y_{15}$ are not included in the Lagrangian.
A level-(0,1,0) conditional SAGE relaxation then produces a bound $(f,g)_X^\star \geq 156.2196$ in 0.05 seconds.
By running Algorithm 1L with $\epsilon_{\text{ineq}} = 100$, we subsequently obtain the geometric-form solution
\begin{align*}
    \vct{y}^\star_{1:8} &= (8.037732, 9, 9, 9, 9, 1, 1, 1.15686275) \\
    \vct{y}^\star_{9:15} &= (1.21505203, 1.58987319, 50, 3\text{E-}50, 1, 50, 50).
\end{align*}
The solution $\vct{y}^\star$ is feasible up to forward-error 3.6E-14, and attains an objective value of $156.219629$.
Because this objective matches the SAGE bound, we conclude that $\vct{y}^\star$ is optimal up to relative error 2E-7.

\begin{table}[h!]
    \centering
    \begin{tabular}{ccccccc}
    \hline
    source-num. & $(p, q, \ell)$ & bound & $\epsilon_{\text{ineq}}$ & $\epsilon_{\text{eq}}$ & objective & infeasibility \\
    \hline
    \cite{HSC2014}-5 & (0,1,0) & $9171.00$ & 1.00E-08 & 0 & $10122.493$ & 4.00E-13 \\
    \cite{Xu2014}-5 & (2,2,0) & -0.390 & 1.00E-08 & 1 & -0.3888114 & 5.00E-17 \\
    \cite{Xu2014}-7 & (0,1,0) & $9397.8$ & 1.00E-08 & 1 & $10252.790$ & 8.00E-14 \\
    \hline
    \end{tabular}
    \caption{Signomial programs for which we did not certify optimality, but nevertheless recovered best-known solutions by using SAGE relaxations.
    Columns $\epsilon_{\text{ineq}}$ and $\epsilon_{\text{eq}}$ indicate the value of infeasibility tolerances when running Algorithm 1, prior to feeding the output of Algorithm 1 to COBYLA as part of Algorithm 1L.
    The last two columns list the objective function value and constraint violations for the output of Algorithm 1L.
    \cite{Xu2014}-7 reports a solution with smaller objective value, however that solution violates an equality constraint with forward error in excess of 0.11.}
    \label{tab:contemp_unknown}
\end{table}

\subsection{Polynomial optimization problems from the literature}\label{sec:comp:poly_ex}

Here we review results of the reference hierarchies from Section \ref{sec:condsage_polys:refhier}, as applied to twenty-two polynomial optimization problems from the literature.
We begin with six unconstrained and eight box-constrained problems (drawn from \cite{VLSE} and \cite{RN2008} respectively).
There are two important lessons which we highlight with the box-constrained problems.
First, bound constraints should still be included in the Lagrangian, even if they can be completely absorbed into the set ``$X$'' in a conditional SAGE relaxation.
Second, even if the original problem does not feature many sign-symmetric constraints, it is often easy to infer valid sign-symmetric constraints which can improve performance of a conditional SAGE relaxation.
The remaining eight problems discussed in this section have nonconvex objectives, nonconvex inequality constraints, and constraints that the optimization variables are nonnegative \cite{LTY2017}.
Our experience with such problems is that partial dualization plays a crucial role in solving them efficiently, primarily with the simpler constraints $\vct{x} \geq 0$.

Table \ref{tab:uncon_boundcon_summary} gives problem data for the unconstrained and box-constrained problems; three such problems are reproduced here, as our Examples 10 through 12.
\begin{equation}
 \inf\{ f(\vct{x}) \doteq 4 x_1^2 - 2.1 x_1^4 + x_1^6 / 3 + x^{}_1 x^{}_2 - 4 x_2^2 + 4 x_2^4 \,:\, \vct{x} \inn \R^2 \}   \tag{Ex10}
\end{equation}
The polynomial $f$ in Example 10 is known as the six-hump camel function; its minimum $f_{\R^2}^\star \approx -1.0316$ is attained at two points, which differ only by sign.
By using polynomial modulators, a level-(3,0) relaxation returns a bound $-1.03170$ in 0.63 seconds of solver time on Machine $\workstation$.
By instead solving a level-(0,2) relaxation (i.e. modulating the signomial representative of $f - \gamma$) we obtain $-1.031630 \leq f_{\R^2}^\star$ in 0.19 seconds.
Example 10 shows how the two-parameter hierarchy in Section \ref{sec:condsage_polys:refhier} can be of practical importance.

Our next two examples are box-constrained problems from the work of Ray and Nataraj \cite{RN2008}; their problems ``Capresse 4'' and ``Butcher 6'' serve as our Examples 11 and 12.
A consistent trend for these problems is that even when a feasible set $X$ can be incorporated entirely into an $X$-SAGE cone, it is still useful to take products of constraints, and solve a relaxation such as \eqref{eq:poly_sage_hier_def} which includes those constraints in the Lagrangian.
\begin{align}
     \textstyle\inf_{\vct{x} \in \R^4} ~&~ f(\vct{x}) \doteq  -x_1^{}x_{3}^3 + 4 x_{2}^{}x_{3}^{2} x_{4}^{} + 4 x_{1}^{}x_{3}^{} x_{4}^{2} + 2x_{2}^{}x_{4}^{3}  + 4 x_{1}^{}x_{3}^{} \tag{Ex11} \\ &  \qquad \qquad \qquad + 4 x_{3}^2 - 10 x_2 x_4 - 10 x_{4}^{2} + 2  \nonumber \\
        \text{s.t.} & ~ g_{1:4}(\vct{x}) \doteq \vct{x} + (1, 1, 1, 1) / 2 \geq \vct{0} \nonumber \\
                    & ~ g_{5:8}(\vct{x}) \doteq (1, 1, 1, 1) / 2 - \vct{x} \geq \vct{0} \nonumber 
\end{align}
Letting $X = \{ \vct{x} \in \R^4 \,:\, -0.5 \leq x_i \leq 0.5 \}$, one can compute $(f,g)_{X}^{(1,2,0)} = (f,g)_{\R^4}^\star = -3.1176903$, where the equality is verified by recovering a solution with Algorithm \ref{alg:pop_solrec}.
Example 11 is noteworthy because the recovered solution required no local-solver refinement that occurs in Algorithm \ref{alg:pop_solrec}L.
\begin{align}
    \inf_{\vct{x} \in \R^6} ~&~ f(\vct{x}) \doteq x_{6} x_{2}^{2}+x_{5} x_{3}^{2}-x_{1} x_{4}^{2}+x_{4}^{3}+x_{4}^{2}-1 / 3 x_{1}+4 / 3 x_{4} \tag{Ex12} \\
\text{s.t.} &~ g_{1:6}(\vct{x}) \doteq \vct{x} + (1,~ 0.1,~ 0.1,~ 1,~ 0.1,~ 0.1) \geq \vct{0} \nonumber \\
            &~ g_{7:12}(\vct{x}) \doteq (0,~ 0.9,~ 0.5, -0.1, -0.05, -0.03) - \vct{x} \geq \vct{0} \nonumber
\end{align}
We can produce a tight bound for Example 12 with ordinary SAGE certificates: a level-(0,3,0) relaxation returns $-1.4392999 \leq (f,g)^\star$ in 0.67 seconds.
Solution recovery is not so easy.
Unless we move to a computationally expensive level-(0,3,1) ordinary SAGE relaxation, Algorithm 2 fails to return a feasible point.
Instead, we infer valid inequalities to describe a set ``$X$'' for use in a conditional SAGE relaxation:
\begin{align*}
    &|x_1| \leq 1, \quad |x_2| \leq 0.9, \quad |x_3| \leq 0.5, \quad \text{and} \\
    &\quad 0.1 \leq |x_4| \leq 1, \quad 
    0.05 \leq |x_5| \leq 0.1, \quad  0.03 \leq |x_6| \leq 0.1.
\end{align*}
The resulting level (0,3,0) relaxation can be solved in 0.64 seconds.
We recover a feasible solution with Algorithm 2, and obtain a solution matching the SAGE bound after refinement by COBYLA.
Example 12 reinforces a message from signomial optimization: even if an ordinary SAGE relaxation can produce a tight bound, a conditional SAGE relaxation is likely to fare better with solution recovery.
Example 12 also shows how useful sign-symmetric constraints can be inferred from a problem statement, even when those constraints are weaker than those found in the problem.

\FloatBarrier

\begin{table}[h!]
    \centering
    \begin{tabular}{cc|ccc|c}
    \hline
    source & name & $n$ & $d$ & minimum & SAGE solved \\
    \hline
    \cite{VLSE} & Rosenbrock & variable & 4 & 0 & yes \\
    - & 6-hump camel & 2 & 6 & -1.0316 & yes \\
    - & 3-hump camel & 2 & 6 & 0 & yes \\
    - & Beale & 2 & 8 & 0 & no \\
    - & Colville & 4 & 4 & 0 & no \\
    - & Goldstein-Price & 2 & 8 & 3 & no \\
    \hline
    \cite{RN2008} & L.V. 4 & 4 & 4 & -20.8 & yes \\
    - & Cap 4 & 4 & 4 & -3.117690 & yes \\
    - & Hun 5 & 5 & 7 & -1436.515 & no \\
    - & Cyc 5 & 5 & 4 & -3 & yes \\
    - & C.D. 6 & 6 & 2 & -270397.4 & no \\
    - & But 6 & 6 & 3 & -1.4393 & yes \\
    - & Heart 8 & 8 & 4 & -1.367754 & yes \\
    - & Viras 8 & 8 & 2 & -29 & yes \\
    \hline
    \end{tabular}
    \caption{Results for SAGE on unconstrained and box-constrained polynomial minimization problems.
    Column ``$d$'' indicates the degree of the polynomial to be minimized.
    The Rosenbrock example allows for different numbers of variables, though results from \cite{SAGE2} show SAGE is tight for any number of variables.
    The Beale, Colville, and Goldstein-Price polynomials proved very difficult for optimization via SAGE certificates.}
    \label{tab:uncon_boundcon_summary}
\end{table}

\FloatBarrier

Now we turn to problems from \cite{LTY2017}, featuring nonconvex inequality constraints.
One of these problems was introduced in Section \ref{sec:condsage_polys:worked_ex_2} as ``Example 4,'' and all of these problems have a similar structure to that of Example 4. 
Most importantly, problems featured here include nonnegativity constraints $\vct{x} \geq \vct{0}$.
The natural SAGE hierarchy produces tight bounds for all of these problems;
results are summarized in Table \ref{tab:bsos_examples}.

There are a few subtle distinctions between geometric-form signomial programs (SPs), and nonnegative polynomial optimization problems (POPs) such as those considered here.
While a polynomial optimization problem over $\vct{x} \geq \vct{0}$ may include $x_i = 0$ in the feasible set, geometric-form SPs typically cannot allow this (since there is the possibility of dividing by zero, or encountering indeterminate forms).
Thus solution recovery from SAGE relaxations is nominally more challenging for a true nonnegative POP, relative to a geometric-form SP.
Despite this challenge, Algorithm \ref{alg:pop_solrec}L successfully recovers optimal solutions for all of these problems.
See Table \ref{tab:bsos_solrec} for details.

The other important distinction between geometric-form SPs and nonnegative POPs, is that there exist established Sums-of-Squares based methods for dealing with nonnegative POPs.
Thus it is useful to understand the performance of SAGE-based methods in the context of SOS-based methods for polynomial optimization.
Although SAGE relaxations took a very long time to solve problems \texttt{P4{\_}6} and \texttt{P4{\_}8}, the runtimes for problems such as \texttt{P6{\_}8} are remarkable.
The unspecified machine in \cite{LTY2017} took over 1600 and 200 seconds to solve \texttt{P6{\_}8} with BSOS and SOS respectively, while SAGE can solve the same problem in under 4 seconds on a mid-tier laptop from 2013.
It seems to the authors that SAGE provides a compelling option for nonnegative polynomial optimization problems, at least for low levels of the hierarchy (such as $(1,1,0)$, or $(0,q,0)$ with small $q$).

\begin{table}[h!]
    \centering
    \begin{tabular}{cccccc}
    \hline
    name & $k$ & minimum & $(p, q, \ell)$ & $\workstation$ time (s) & $\laptop$ time (s) \\
    \hline
    \texttt{P4{\_}4} & 8 & -0.033538 & (1,1,0) & 0.47 & 0.7 \\
    \texttt{P4{\_}6} & 7 & -0.060693 & (1,1,1) & 289 & 292 \\
    \texttt{P4{\_}8} & 7 & -0.085813 & (2,1,0) & 396 & 460 \\
    \texttt{P6{\_}4} & 8 & -0.576959 & (1,1,0) & 3.45 & 4.1 \\
    \texttt{P6{\_}6} & 8 & -0.412878 & (1,1,0) & 3.04 & 4.37 \\
    \texttt{P6{\_}8} & 8 & -0.409020 & (1,1,0) & 3.25 & 3.83 \\
    \texttt{P8{\_}4} & 8 & -0.436026 & (1,1,0) & 7.18 & 7.25 \\
    \texttt{P8{\_}6} & 8 & -0.412878 & (1,1,0) & 8.67 & 8.21 \\
    \hline
    \end{tabular}
    \caption{Generic polynomial optimization problems, over the nonnegative orthant.
    Problems can be found in both \cite{LTY2017} and \cite{WLT2018}; names ``\texttt{P}$n$\texttt{\_}$d$'' indicate the number of variables $n$ and degree $d$ of the given problem.
    Column $k$ gives the number of inequality constraints, excluding constraints $\vct{x} \geq \vct{0}$, as well as those which trivially follow from $\vct{x} \geq \vct{0}$.
    SAGE solved all problems listed here, at the indicated level of the hierarchy, and with the indicated solver runtimes (in seconds).}
    \label{tab:bsos_examples}
\end{table}

\begin{table}[h!]
    \centering
    \begin{tabular}{c|cc|cc}
    \hline
    & \multicolumn{2}{c|}{ Algorithm \ref{alg:pop_solrec} } & \multicolumn{2}{c}{ Algorithm \ref{alg:pop_solrec}L } \\
    name & $f(\vct{x})$ & $\min g(\vct{x})$ & $f(\vct{x})$ & $\min g(\vct{x})$ \\
    \hline
    \texttt{P4{\_}4} & -0.033386 & 0.00E-00 & -0.033538 & 0.00E-00 \\
    \texttt{P4{\_}6} & -0.057164 & 4.06E-02 & -0.060693 & -2.44E-14 \\
    \texttt{P4{\_}8} & -0.066671 & 1.42E-01 & -0.085813 & -3.46E-14 \\
    \texttt{P6{\_}4} & -0.570848 & 4.04E-08 & -0.576959 & -1.03E-13 \\
    \texttt{P6{\_}6} & -0.412878 & 5.46E-09 & -0.412878 & -1.68E-13 \\
    \texttt{P6{\_}8} & -0.409018 & 1.07E-07 & -0.409020 & -5.82E-14 \\
    \texttt{P8{\_}4} & -0.436024 & 3.27E-08 & -0.436026 & -2.58E-13 \\
    \texttt{P8{\_}6} & -0.412878 & 2.78E-43 & -0.412878 & -2.55E-12 \\
    \hline
    \end{tabular}
    \caption{Comparison of Algorithm \ref{alg:pop_solrec} and Algorithm \ref{alg:pop_solrec}L for solution recovery for eight nonconvex polynomial optimization problems in the literature (ref. \cite{LTY2017,WLT2018}).
    Both algorithms were initialized with solutions to a level-(1,1,0) conditional SAGE relaxation, and Algorithm \ref{alg:pop_solrec}L always recovers an optimal solution.
    It is especially notable that Algorithm \ref{alg:pop_solrec}L recovers optimal solutions for problems \texttt{P4{\_}6} and \texttt{P4{\_}8}, since level-(1,1,0) relaxations do not produce tight bounds for these problems.}
    \label{tab:bsos_solrec}
\end{table}

\subsection{Minimizing random sparse quartics over the sphere}\label{sec:comp:rand_poly}

In this section we describe how SAGE relaxations perform for minimizing sparse quartic forms over the unit sphere; this particular class of test problems is inspired from similar computational experiments by Ahmadi and Majumdar in their work on LP and SOCP-based inner-approximations of the SOS cone \cite{SDSOS}.

Our method for generating these problems is as follows: initialize $f = 0$ as a polynomial in $n$ variables, and proceed to iterate over all tuples ``$t$'' in $[n]^4$.
With probability $n \log n / n^4$, sample a coefficient $c_t$ from the standard normal distribution, and add the term $c_t \vct{x}^{\vct{\alpha}_t}$ to $f$, where $\vct{\alpha}_t \in [4]^n$ has $\alpha_{tj} = |\{ i : t_{i} = j \}|$.
The expected number of terms in $f$ after this procedure is roughly $n \log n$.
Once a polynomial is generated, we solve a level-(0,2,0) conditional SAGE relaxation for $(f,g)^\star_{\R^n}$, where $g(\vct{x}) = 1 - \vct{x}^\intercal\vct{x}$.\footnote{Because $f$ is homogeneous, $\vct{x}^\intercal \vct{x} = 1$ may be relaxed to $\vct{x}^\intercal \vct{x} \leq 1$ without loss of generality}
The set ``$X$'' in the conditional SAGE relaxation is $X = \{ \vct{x} : g(\vct{x}) \geq 0 \}$.
Figure \ref{fig:rand_quartics_plot} and Table \ref{tab:rand_quartics_runtime} report results for 20 problems in 10 variables, 20 problems in 20 variables, 14 problems in 30 variables, and 10 problems in 40 variables.
\begin{figure}[h!]
    \centering
    \includegraphics[width=0.95\textwidth]{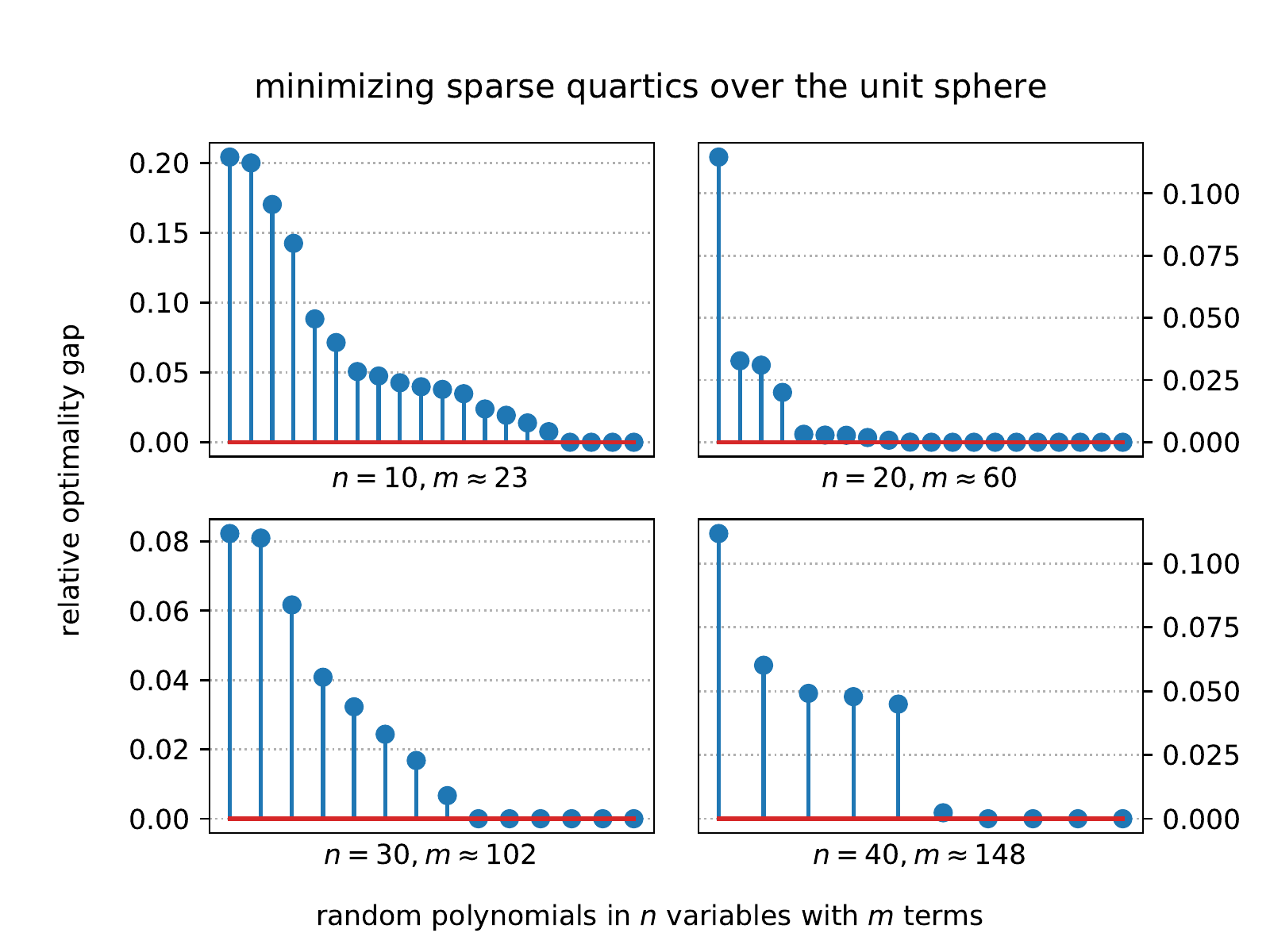}
    \caption{Upper-bounds on the optimality gap $|(f,g)^{(0,2,0)}_X - (f,g)^\star_{\R^n}| / |(f,g)_{\R^n}^\star|$.
    The value $(f,g)^\star_{\R^n}$ in these calculations was replaced by the objective value of a solution produced by Algorithm \ref{alg:pop_solrec}L.
    SAGE solved 4 problems in 10 variables, 10 problems in 20 variables, 6 problems in 30 variables, and 4 problems in 40 variables.}
    \label{fig:rand_quartics_plot}
\end{figure}

\FloatBarrier

\begin{table}[h!]
    \centering
    \begin{tabular}{c|cccc}
    \hline
    solve time (s) & $n=10$ & $n=20$ & $n=30$ & $n=40$ \\
    \hline
    mean & 7.54E-01 & 6.50E-00 & 6.46E+01 & 4.59E+02 \\
    std dev. & 8.74E-02 & 8.54E-01 & 1.38E+01 & 7.20E+01 \\
    \hline
    \end{tabular}
    \caption{Solver runtimes for level-(0,2,0) conditional SAGE relaxations, on Machine $\workstation$.
    Similar runtimes can be expected for Machine $\laptop$ with $n \in \{ 10, 20, 30 \}$. 
    Solve times with Machine $\laptop$ can take much longer for $n \geq 40$, since only a portion of the problem fits in RAM.}
    \label{tab:rand_quartics_runtime}
\end{table}

\FloatBarrier

\section{Discussion and Conclusion}

In this article we introduced and developed notions of conditional SAGE certificates for both signomials and polynomials.
In the signomial case, the underlying theory of the $X$-SAGE cones has deep roots in convex duality, while in the polynomial case, we derived efficient representations of important $X$-SAGE cones by employing ``signomial representatives'' introduced by the authors in earlier work.
Through worked examples and computational experiments, we have demonstrated that subsequent convex relaxations can be used to solve many signomial and polynomial optimization problems from the literature.
The authors believe that conditional SAGE certificates are a fertile area for research in both the theory and practice of constrained optimization; we briefly describe some possible directions here.

From an applications perspective, it would be interesting to see how conditional SAGE certificates can help with branch-and-bound algorithms.
Whether considering signomials in geometric form, signomials in exponential form, or polynomials, bound constraints can easily be incorporated into $X$-SAGE cones.
On the algorithmic front, important work remains to be done on solvers for relative entropy programs, primarily with regards to problems where the optimal solution contains a large number of variables along certain extreme rays of the exponential cone.

Conditional SAGE certificates raise many questions of potential interest to researchers in real algebraic geometry, and optimization-via-nonnegativity-certificates.
Consider for example how the minimax-free hierarchy from Section \ref{sec:condsage_sigs:ref_hier} adopted a particular form for the modulating function: $\Sig(\mtx{\alpha},\vct{1})^{\ell}$.
What benefit might there be to instead using a modulator $\Sig(\mtx{\hat{\alpha}},\vct{1})^\ell$, where $\mtx{\hat{\alpha}}$ was chosen with consideration to $X$? Equally important, how could one efficiently identify good candidates for such $\mtx{\hat{\alpha}}$, given only $\mtx{\alpha}$ and a description of $X$?
And at the most fundamental level, one asks -- for what exponents $\mtx{\alpha}$ and what sets $X$ do $X$-SAGE cones coincide with $X$-nonnegativity cones?
Prior work (c.f. \cite{SAGE2}, and more recently \cite{SONCboundary}) has uncovered meaningful sufficient conditions for this problem when $X = \R^n$.
These sufficient conditions have hitherto been stated in terms of the combinatorial geometry of the exponent vectors $\{ \vct{\alpha}_i\}_{i=1}^m$.
It will be very interesting to see how such results do (or do not) generalize to $X$-SAGE cones for arbitrary $X$.

\printbibliography

\section{Appendix}

\begin{algorithm}[H]
Input: A matrix $\mtx{\alpha} \in \N^{m \times n}$. Vectors $\vct{v} \in \cpolysage{\mtx{\alpha},X}^\dagger$ and $\vct{\hat{v}} \in \csage{\mtx{\alpha},Y}$.
Zero threshold parameter $\epsilon_0 > 0$.
\begin{algorithmic}[1]
\Procedure{VariableMagnitudes}{$\mtx{\alpha}, \vct{v}, \vct{\hat{v}}, \epsilon_0$}
\State $M \gets []$
\For{$j=1,\ldots,m$}
    \If{ $\hat{v}_j = 0$ }
       \State \textbf{Continue}
    \EndIf
    \State Recover $\vct{z}$ in $\R^n$ s.t. $\hat{v}_j \log(\vct{\hat{v}} / \hat{v}_j) \geq [\mtx{\alpha}- \vct{1}\vct{\alpha}_j] \vct{z}~$ and $~(\vct{z},\hat{v}_j) \in \cone Y$.
    \State $\vct{y} \gets \vct{z} / \hat{v}_j$
    \State $M$.append($\exp \vct{y}$)
\EndFor
\If{$(\vct{x}^{\vct{\alpha}_1},\ldots,\vct{x}^{\vct{\alpha}_m}) \neq |\vct{v}|$ for all $\vct{x} \inn M$}
    \State Compute $(\vct{y}, t)$ solving Problem \ref{eq:condsage_polys:solrec:mags:opt_prob}, for given $\epsilon_0$.
    \State $M$.append($\exp \vct{y}$)
\EndIf
\State \textbf{return} $M$.
\EndProcedure
\end{algorithmic}
\caption{magnitude recovery for dual SAGE polynomial relaxations.}
\label{alg:pop_magrec}
\end{algorithm}
As in the signomial case, Algorithm \ref{alg:pop_magrec} always returns a vector $\vct{x} \in X$.
Assuming that $\vct{z}$ from Line 7 are already computed as part of representing $\vct{\hat{v}}$, the complexity of this algorithm is dominated by Line 12.
The runtime of Line 12 is in turn negligible relative to solving a SAGE relaxation to obtain vectors $\vct{v}$ and $\vct{\hat{v}}$.
Infeasibility errors encountered in Line 12 should be handled by jumping to Line 15.

\begin{algorithm}[H]
Input: A matrix $\mtx{\alpha} \in \N^{m \times n}$. A vector $\vct{v}$ in $\R^m$. A Boolean $\mathtt{heuristic}$.
\begin{algorithmic}[1]
\Procedure{VariableSigns}{$\mtx{\alpha}, \vct{v}, \mathtt{heuristic}$}
\State $U \gets \{i \,:\, v_i \neq 0 \text{ and } \vct{\alpha}_i \text{ is not even } \}$
\State $W \gets \{j \,:\, \alpha_{ij} \equiv 1 \mod 2 \text{ for some } i \text{ in } U \}$
\State $Z \gets  \{ \vct{z} \in \{0, 1\}^n \,:\,  \mtx{\alpha}[U,:]\vct{z} \equiv (\vct{v} < 0)[U] \mod 2,~ z_i = 0 \text{ for } i \text{ in } [n] \setminus W \}$
\State $S \gets \{ \}$
\For{$\vct{z}$ in $Z$}
    \State $\vct{s} \gets \vct{1}$
    \For{$j$ in $ \{j \,:\, \alpha_{ij} > 0 \text{ for some } i \text{ in } U \}$}
        \State $s_j \gets -1 \text{ if } z_j = 1, ~ 1 \text{ if } z_j = 0$
    \EndFor
    \State $S \gets S \cup \{\vct{s}\}$
\EndFor
\State If $S = \emptyset$ and $\mathtt{heuristic}$, update $S \leftarrow \{ \text{HueristicSigns}(\mtx{\alpha},\vct{v})\}$.
\State \textbf{return} $S$.
\EndProcedure
\end{algorithmic}
\caption{sign recovery for dual SAGE polynomial relaxations.}
\label{alg:pop_signrec}
\end{algorithm}
Let us describe the ways in which Algorithm \ref{alg:pop_signrec} differs from the discussion in Section \ref{sec:condsage_polys:solrec:signs}.
First- there are changes to the sets $U$ and $W$.
The set $U$ now drops any rows $\vct{\alpha}_i$ from $\mtx{\alpha}$ where $\vct{\alpha}_i$ is even; it is easy to verify that this does not affect the set of solutions to the appropriate linear system.
The set $W$ changes by only considering $j$ where at least one $\alpha_{ij} \equiv 1 \mod 2$. This change is valid because if $\alpha_{ij}$ is even for all $i$, then the sign of variable $x_j$ is irrelevant to the underlying optimization problem, and we make take $x_j \geq 0$ without loss of generality.

Next we speak to the ``hueristic'' sign recovery.
We partly mean to leave this as open-ended, however for completeness we describe the algorithm used in \texttt{sageopt}.
The goal is to find a vector $\vct{s}$ in $\{+1,-1\}$ so that the signs of $\vct{s}^{\mtx{\alpha}} \doteq (\vct{s}^{\vct{\alpha}_1},\ldots,\vct{s}^{\vct{\alpha}_m})$ match the signs of $\vct{v}$ to the greatest extent possible. However, we consider how having $\vct{s}^{\vct{\alpha}_i}$ match the sign of $v_i$ may not be very important if $v_i$ is very small.
Therefore we use a merit function $M(\vct{s}) = \vct{v}^\intercal \vct{s}^{\mtx{\alpha}}$ to evaluate the quality of candidate signs $\vct{s}$.
We apply a greedy algorithm to maximize the merit function $M(\vct{s})$ as follows: initialize $\vct{s} = \vct{1}$, and a set of undecided coordinates $C = \{1,\ldots,n\}$. As long as the set $C$ is nonempty, find an index $i^\star \in C$ so that changing $s_{i^\star} = 1$ to $s_{i^\star} = -1$ maximizes improvement in the merit function. If the improvement is positive, then perform the update $s_{i^\star} \gets -1$.
Regardless of whether or not the improvement is positive, remove $i^\star$ from $C$.
Once $C$ is empty, return $\vct{s}$.

\end{document}